\newtheorem{teo}{Theorem}
\newtheorem{lema}{Lemma}
\newenvironment{proof}[1][Proof]{\noindent\textbf{#1.} }{\ \rule{0.5em}{0.5em}}
\begin{document}

\title{The symmetrization problem for multiple orthogonal polynomials}
\author{Am\'ilcar Branquinho$^{1}$ and Edmundo J. Huertas$^{1}$\thanks{%
The work of the first author was partially supported by Centro de
Matem\'atica da Universidade de Coimbra (CMUC), funded by the European
Regional Development Fund through the program COMPETE and by the Portuguese
Government through the FCT - Funda\c c\~ao para a Ci\^encia e a Tecnologia
under the project PEst-C/MAT/UI0324/2011. The work of the second author was
supported by Funda\c c\~ao para a Ci\^encia e a Tecnologia (FCT) of
Portugal, ref. SFRH/BPD/91841/2012, and partially supported by Direcci\'on
General de Investigaci\'on Cient\'ifica y T\'ecnica, Ministerio de
Econom\'ia y Competitividad of Spain, grant MTM2012-36732-C03-01.} \\
$^{1}$Universidade de Coimbra, FCTUC, CMUC and Departamento de Matem\'atica\\
Apartado 3008, 3000 Coimbra, Portugal\\
ajplb@mat.uc.pt, ehuertas@mat.uc.pt}
\date{\emph{(\today)}}
\maketitle

\begin{abstract}
We analyze the effect of symmetrization in the theory of multiple orthogonal
polynomials. For a symmetric sequence of type~II multiple orthogonal
polynomials satisfying a high--term recurrence relation, we fully
characterize the Weyl function associated to the corresponding block Jacobi
matrix as well as the Stieltjes matrix function. Next, from an arbitrary
sequence of type II multiple orthogonal polynomials with respect to a set of 
$d$ linear functionals, we obtain a total of $d+1$ sequences of type II
multiple orthogonal polynomials, which can be used to construct a new
sequence of symmetric type~II multiple orthogonal polynomials. Finally, we
prove a Favard-type result for certain sequences of matrix multiple
orthogonal polynomials satisfying a matrix four--term recurrence relation
with matrix coefficients.
\end{abstract}

AMS SUBJECT CLASSIFICATION (2000): Primary 33C45; Secondary 39B42.


\section{Introduction}

\label{[Section-1]-Intro}


In recent years an increasing attention has been paid to the notion of
multiple orthogonality. Multiple orthogonal polynomials are a generalization
of orthogonal polynomials \cite{Chi78}, satisfying orthogonality conditions
with respect to a number of measures, instead of just one measure. There
exists a vast literature on this subject, e.g. the classical works \cite%
{A-JCAM-98}, \cite{ABV-TAM-03}, \cite[Ch.\thinspace 23]{I-EMA-05} and \cite%
{NS-TrnAMS-91} among others. A characterization through a vectorial
functional equation, where the authors call them $d$\textit{--orthogonal
polynomials} instead of multiple orthogonal polynomials, was done in \cite%
{DM-JAT-82}. Their asymptotic behavior have been studied in \cite{AKS-CA-09}%
, also continued in \cite{DL-CA-2012}, and properties for their zeros have
been analyzed in \cite{HV-JMAA-12}.

Bäcklund transformations resulted from the symmetrization process in the
usual (standard) orthogonality, which allows one to jump, from one hierarchy
to another, in the whole Toda lattices hierarchy (see \cite{GGKM-PRL67}).
That is, they allow reinterpretations inside the hierarchy.\ In \cite%
{BBF-JMAA-13}, the authors have found certain Bäcklund--type transformations
(also known as Miura--type transformations) which allow to reduce problems
in a given full Konstant--Toda hierarchy to another. Also, in\ \cite%
{BB-JDEA-09}, where Peherstorfer's work \cite{P-JCAM-01} is extended to the
whole Toda hierarchy, it is shown how this system can be described with the
evolution of only one parameter instead of two, using exactly this kind of
transformations. Other application to the Toda systems appear in \cite%
{AKI-CA-00}, \cite{BBF-JMAA-10}, and \cite{BBF-JMAA-11}, where the authors
studied Bogoyavlenskii systems which were modeled by certain symmetric
multiple orthogonal polynomials.

In this paper, we are interested in analyze the effect of symmetrization in
systems of multiple orthogonality measures. Our viewpoint seeds some new
light on the subject, and we prove that the symmetrization process in
multiple orthogonality is a model to define the aforementioned Bä%
cklund--type transformations, as happens in the scalar case with the Bä%
cklund transformations (see \cite{Chi78}, \cite{MS-NA-92}, \cite{P-JCAM-01}%
). Furthermore, we solve the so called \textit{symmetrization problem} in
the theory of multiple orthogonal polynomials. We apply certain \textit{%
Darboux transformations}, already described in \cite{BBF-JMAA-13}, to a $%
(d+2)$--banded matrix, associated to a $(d+2)$--term recurrence relation
satisfied by an arbitrary sequence of type~II multiple orthogonal
polynomials, to obtain a total of $d+1$ sequences of not necessarily
symmetric multiple orthogonal polynomials, which we use to construct a new
sequence of symmetric multiple orthogonal polynomials.

On the other hand, following the ideas in \cite{MS-NA-92} (and the
references therein) for standard sequences of orthogonal polynomials, in 
\cite{MMR-JDEA-11} (see also\ \cite{DM-A-92}) the authors provide a cubic
decomposition for sequences of polynomials, multiple orthogonal with respect
to a two different linear functionals. Concerning the symmetric case, in 
\cite{MM-MJM13} this cubic decomposition is analyzed for a 2-symmetric
sequence of polynomials, which is called a \textit{diagonal cubic
decomposition (CD)} by the authors. Here, we also extend this notion of
diagonal decomposition to a more general case, considering symmetric
sequences of polynomials multiple orthogonal with respect to $d>3$ linear
functionals.

The structure of the manuscript is as follows. In Section \ref%
{[Section-2]-Defs} we summarize without proofs the relevant material about
multiple orthogonal polynomials, and a basic background about the matrix
interpretation of the type~II multi-orthogonality conditions with respect to
the a regular system of $d$ linear functionals $\{u^{1},\ldots ,u^{d}\}$ and
diagonal multi--indices. In Section \ref{[Section-3]-FyR-Functions} we fully
characterize the Weyl function $\mathcal{R}_{J}$ and the Stieltjes matrix
function $\mathcal{F}$ associated to the block Jacobi matrix $J$
corresponding to a $(d+2)$--term recurrence relation satisfied by a
symmetric sequence of type~II multiple orthogonal polynomials. In Section %
\ref{[Section-4]-SymProb}, starting from an arbitrary sequence of type~II
multiple polynomials satisfying a $(d+2)$--term recurrence relation, we
state the conditions to find a total of $d+1$ sequences of type~II multiple
orthogonal polynomials, in general non--symmetric, which can be used to
construct a new sequence of \textit{symmetric} type~II multiple orthogonal
polynomials. Moreover, we also deal with the converse problem, i.e., we
propose a decomposition of a given \textit{symmetric} type~II multiple
orthogonal polynomial sequence, which allows us to find a set of other (in
general non--symmetric) $d+1$ sequences of type~II multiple orthogonal
polynomials, satisfying in turn $(d+2)$--term recurrence relations. Finally,
in Section \ref{[Section-5]-FavardTh}, we present a Favard-type result,
showing that certain $3\times 3$ matrix decomposition of a type~II multiple $%
2$--orthogonal polynomials, satisfy a \textit{matrix four--term recurrence
relation}, and therefore it is type~II multiple $2$--orthogonal (in a matrix
sense) with respect to a certain system of matrix measures.


\section{Definitions and matrix interpretation of multiple orthogonality}

\label{[Section-2]-Defs}


Let $\boldsymbol{n}=(n_{1},...,n_{d})\in \mathbb{N}^{d}$ be a multi--index
with length $|\mathbf{n}|:=n_{1}+\cdots +n_{d}$ and let $\{u^{j}\}_{j=1}^{d}$
be a set of linear functionals, i.e. $u^{j}:\mathbb{P}\rightarrow \mathbb{C}$%
. Let $\{P_{\mathbf{n}}\}$ be a sequence of polynomials, with $\deg P_{%
\mathbf{n}}$ is at most $|\mathbf{n}|$. $\{P_{\mathbf{n}}\}$ is said to be 
\textit{type~II multiple orthogonal} with respect to the set of linear
functionals $\{u_{j}\}_{j=1}^{d}$ and multi--index $\boldsymbol{n}$ if%
\begin{equation}
u^{j}(x^{k}P_{\mathbf{n}})=0\,,\ \ k=0,1,\ldots ,n_{j}-1\,,\ \ j=1,\ldots
,d\,.  \label{[Sec2]-OrtConduj}
\end{equation}%
A multi--index $\boldsymbol{n}$ is said to be \textit{normal} for the set of
linear functionals $\{u_{j}\}_{j=1}^{d}$, if the degree of $P_{\mathbf{n}}$
is exactly $|\mathbf{n}|=n$. When all the multi--indices of a given family
are normal, we say that the set of linear functionals $\{u_{j}\}_{j=1}^{d}$
is \textit{regular}. In the present work, we will restrict our attention
ourselves to the so called \textit{diagonal multi--indices} $\boldsymbol{n}%
=(n_{1},...,n_{d})\in \mathcal{I}$, where%
\begin{equation*}
\mathcal{I}=\{(0,0,\ldots ,0),(1,0,\ldots ,0),\ldots ,(1,1,\ldots
,1),(2,1,\ldots ,1),\ldots ,(2,2,\ldots ,2),\ldots \}.
\end{equation*}%
Notice that there exists a one to one correspondence, $\mathbf{i}$, between
the above set of diagonal multi--indices $\mathcal{I}\subset \mathbb{N}^{d}$
and $\mathbb{N}$, given by $\mathbf{i}(\mathbb{N}^{d})=|\mathbf{n}|=n$.
Therefore, to simplify the notation, we write in the sequel $P_{\mathbf{n}%
}\equiv P_{|\mathbf{n}|}=P_{n}$. The left--multiplication of a linear
functional $u:\mathbb{P}\rightarrow \mathbb{C}$ by a polynomial $p\in 
\mathbb{P}$ is given by the new linear functional $p\,u:\mathbb{P}%
\rightarrow \mathbb{C}$ such that%
\begin{equation*}
p\,u(x^{k})=u(p(x)x^{k})\,,\ \ k\in \mathbb{N}\,.
\end{equation*}

Next, we briefly review a matrix interpretation of type~II multiple
orthogonal polynomials with respect to a system of $d$ regular linear
functionals and a family of diagonal multi--indices. Throughout this work,
we will use this matrix interpretation as a useful tool to obtain some of
the main results of the manuscript. For a recent and deeper account of the
theory (in a more general framework, considering quasi--diagonal
multi--indices) we refer the reader to \cite{BCF-NA-10}.

Let us consider the family of vector polynomials%
\begin{equation*}
\mathbb{P}^{d}=\{%
\begin{bmatrix}
P_{1} & \cdots & P_{d}%
\end{bmatrix}%
^{T},\ \ d\in \mathbb{N},\,P_{j}\in \mathbb{P}\},
\end{equation*}%
and $\mathcal{M}_{d\times d}$ the set of $d\times d$ matrices with entries
in $\mathbb{C}$. Let $\{\mathcal{X}_{j}\}$ be the family of vector
polynomials $\mathcal{X}_{j}\in \mathbb{P}^{d}$\ defined by%
\begin{equation}
\mathcal{X}_{j}=%
\begin{bmatrix}
x^{jd} & \cdots & x^{(j+1)d-1}%
\end{bmatrix}%
^{T},\ \ j\in \mathbb{N},  \label{[Sec2]-vecXj}
\end{equation}%
where $\mathcal{X}_{0}=%
\begin{bmatrix}
1 & \cdots & x^{d-1}%
\end{bmatrix}%
^{T}$. By means of the shift $n\rightarrow nd$, associated with $\{P_{n}\}$,
we define the sequence of vector polynomials $\{\mathcal{P}_{n}\}$, with%
\begin{equation}
\mathcal{P}_{n}=%
\begin{bmatrix}
P_{nd}(x) & \cdots & P_{(n+1)d-1}(x)%
\end{bmatrix}%
^{T},\ \ n\in \mathbb{N},\,\mathcal{P}_{n}\in \mathbb{P}^{d}.
\label{[Sec2]-vecPn}
\end{equation}%
Let $u^{j}:\mathbb{P}\rightarrow \mathbb{C}$ with $j=1,\ldots ,d$ a system
of linear functionals as in (\ref{[Sec2]-OrtConduj}). From now on, we define
the \textit{vector of functionals }$\mathcal{U}=%
\begin{bmatrix}
u^{1} & \cdots & u^{d}%
\end{bmatrix}%
^{T}$ acting in $\mathbb{P}^{d}\rightarrow \mathcal{M}_{d\times d}$, by%
\begin{equation*}
\mathcal{U}(\mathcal{P})=\left( \mathcal{U}\text{\textperiodcentered }%
\mathcal{P}^{T}\right) ^{T}=%
\begin{bmatrix}
u^{1}(P_{1}) & \cdots & u^{d}(P_{1}) \\ 
\vdots & \ddots & \vdots \\ 
u^{1}(P_{d}) & \cdots & u^{d}(P_{d})%
\end{bmatrix}%
.
\end{equation*}%
Let%
\begin{equation*}
A_{\ell }(x)=\sum_{k=0}^{\ell }A_{k}^{\ell }\,x^{k},
\end{equation*}%
be a matrix polynomial of degree $\ell $, where $A_{k}^{\ell }\in \mathcal{M}%
_{2\times 2}$, and $\mathcal{U}$ a vector of functional. We define the new
vector of functionals called \textit{left multiplication of }$\mathcal{U}$%
\textit{\ by a matrix polynomial }$A_{\ell }$, and we denote it by $A_{\ell }%
\mathcal{U}$,\ to the map of~$\mathbb{P}^{d}$ into $\mathcal{M}_{d\times d}$%
, described by%
\begin{equation}
\left( A_{\ell }\mathcal{U}\right) (\mathcal{P})=\sum_{k=0}^{\ell }\left(
x^{k}\mathcal{U}\right) \left( \mathcal{P}\right) (A_{k}^{n})^{T}.
\label{[Sec2]-Def-LeftMultp}
\end{equation}%
From (\ref{[Sec2]-Def-LeftMultp}) we introduce the notion of \textit{moments
of order }$j\in \mathbb{N}$, associated with the vector of functionals $x^{k}%
\mathcal{U}$, which will be in general the following $d\times d$\ matrices%
\begin{equation*}
\mathcal{U}_{j}^{k}=\left( x^{k}\mathcal{U}\right) (\mathcal{X}_{j})=%
\begin{bmatrix}
u^{1}(x^{jd+k}) & \cdots & u^{d}(x^{jd+k}) \\ 
\vdots & \ddots & \vdots \\ 
u^{1}(x^{(j+1)d-1+k}) & \cdots & u^{d}(x^{(j+1)d-1+k})%
\end{bmatrix}%
,
\end{equation*}%
with $j,k\in \mathbb{N}$, and from this moments, we construct the \textit{%
block \textit{Hankel} matrix of moments}%
\begin{equation*}
\mathcal{H}_{n}=%
\begin{bmatrix}
\mathcal{U}_{0}^{0} & \cdots & \mathcal{U}_{0}^{n} \\ 
\vdots & \ddots & \vdots \\ 
\mathcal{U}_{n}^{0} & \cdots & \mathcal{U}_{n}^{n}%
\end{bmatrix}%
\,,\ \ n\in \mathbb{N}.
\end{equation*}%
We say that the vector of functionals $\mathcal{U}$ is \textit{regular}, if
the determinants of the principal minors of the above matrix are non-zero
for every $n\in \mathbb{N}$. Having in mind (\ref{[Sec2]-vecXj}) it is
obvious that $\mathcal{X}_{j}=(x^{d})^{j}\mathcal{X}_{0},\,\,\,j\in \mathbb{N%
}$. Thus, from (\ref{[Sec2]-vecPn}) we can express $\mathcal{P}_{n}(x)$ in
the alternative way%
\begin{equation}
\mathcal{P}_{n}(x)=\sum_{j=0}^{n}P_{j}^{n}\mathcal{X}_{j}\,,\ \ P_{j}^{n}\in 
\mathcal{M}_{d\times d}\,,  \label{[Sec2]-MatrForm-I}
\end{equation}%
where the matrix coefficients $P_{j}^{n},\ j=0,1,\ \ldots ,\ n$ are uniquely
determined. Thus, it also occurs%
\begin{equation}
\mathcal{P}_{n}(x)=W_{n}(x^{d})\mathcal{X}_{0}\,,  \label{[Sec2]-MatForm-II}
\end{equation}%
where $W_{n}$ is a matrix polynomial (i.e., $W_{n}$ is a $d\times d$ matrix
whose entries are polynomials) of degree $n$ and dimension $d$, given by%
\begin{equation}
W_{n}(x)=\sum_{j=0}^{n}P_{j}^{n}x^{j}\,,\ \ P_{j}^{n}\in \mathcal{M}%
_{d\times d}.  \label{[Sec2]-MatForm-II-Vn}
\end{equation}%
Notice that the matrices $P_{j}^{n}\in \mathcal{M}_{d\times d}$ in (\ref%
{[Sec2]-MatForm-II-Vn}) are the same as in (\ref{[Sec2]-MatrForm-I}). Within
this context, we can now describe the matrix interpretation of multiple
orthogonality for diagonal multi--indices. Let $\{\mathcal{P}_{n}\}$ be a
sequence of vector polynomials with polynomial entries as in (\ref%
{[Sec2]-vecPn}), and a vector of functionals $\mathcal{U}$ as described
above. $\{\mathcal{P}_{n}\}$ is said to be a \textit{type~II vector multiple
orthogonal polynomial sequence} with respect to the vector of functionals $%
\mathcal{U}$, and a set of diagonal multi--indices, if%
\begin{equation}
\left. 
\begin{array}{rll}
i) & (x^{k}\mathcal{U})(\mathcal{P}_{n})=0_{d\times d}\,, & k=0,1,\ldots
,n-1\,, \\ 
ii) & (x^{n}\mathcal{U})(\mathcal{P}_{n})=\Delta _{n}\,, & 
\end{array}%
\right\}  \label{[Sec2]-OrthCond-U}
\end{equation}%
where $\Delta _{n}$ is a regular upper triangular $d\times d$ matrix (see 
\cite[Th. 3]{BCF-NA-10} considering diagonal multi--indices).

Next, we introduce a few aspects of the duality theory, which will be useful
in the sequel. We denote by $\mathbb{P}^{\ast }$ the dual space of $\mathbb{P%
}$, i.e. the linear space of linear functionals defined on $\mathbb{P}$ over 
$\mathbb{C}$. Let $\{P_{n}\}$ be a sequence of monic polynomials. We call $%
\{\ell _{n}\}$, $\ell _{n}\in \mathbb{P}^{\ast }$, the \textit{dual sequence}%
\ of $\{P_{n}\}$\ if $\ell _{i}(P_{j})=\delta _{i,j},\,\,i,\,j\in \mathbb{N}$%
\ holds. Given a sequence of linear functionals $\{\ell _{n}\}\in \mathbb{P}%
^{\ast }$, by means of the shift $n\rightarrow nd$, the vector sequence of
linear functionals $\{\mathcal{L}_{n}\}$, with%
\begin{equation}
\mathcal{L}_{n}=%
\begin{bmatrix}
\ell _{nd} & \cdots & \ell _{(n+1)d-1}%
\end{bmatrix}%
^{T},\ \ n\in \mathbb{N},  \label{[Sec2]-vecLinFrm}
\end{equation}%
is said to be the \textit{vector sequence of linear functionals} associated
with $\{\ell _{n}\}$.

It is very well known (see \cite{DM-JAT-82}) that a given sequence of
type~II polynomials $\{P_{n}\}$, simultaneously orthogonal with respect to a 
$d$ linear functionals, or simply $d$\textit{--orthogonal} polynomials,
satisfy the following $(d+2)$--term order recurrence relation%
\begin{equation}
xP_{n+d}(x)=P_{n+d+1}(x)+\beta _{n+d}P_{n+d}(x)+\sum_{\nu =0}^{d-1}\gamma
_{n+d-\nu }^{d-1-\nu }P_{n+d-1-\nu }(x)\,,  \label{[Sec2]-HTRR-MultOP}
\end{equation}%
$\gamma _{n+1}^{0}\neq 0$ for $n\geq 0$, with the initial conditions $%
P_{0}(x)=1$, $P_{1}(x)=x-\beta _{0}$, and%
\begin{equation*}
P_{n}(x)=(x-\beta _{n-1})P_{n-1}(x)-\sum_{\nu =0}^{n-2}\gamma _{n-1-\nu
}^{d-1-\nu }P_{n-2-\nu }(x)\,,\ \ 2\leq n\leq d\,.
\end{equation*}%
E.g., if $d=2$, the sequence of monic type~II multiple orthogonal
polynomials $\{P_{n}\}$ with respect to the regular system of functionals $%
\{u^{1},u^{2}\}$ and normal multi--index satisfy, for every $n\geq 0$, the
following four term recurrence relation (see \cite[Lemma 1-a]{BCF-NA-10}, 
\cite{K-JCAM-95})%
\begin{equation}
xP_{n+2}(x)=P_{n+3}(x)+\beta _{n+2}P_{n+2}(x)+\gamma
_{n+2}^{1}P_{n+1}(x)+\gamma _{n+1}^{0}P_{n}(x)\,,  \label{[Sec2]-4TRR-P}
\end{equation}%
where $\beta _{n+2},\gamma _{n+2}^{1},\gamma _{n+1}^{0}\in \mathbb{C}$, $%
\gamma _{n+1}^{0}\neq 0$, $P_{0}(x)=1$, $P_{1}(x)=x-\beta _{0}$ and $%
P_{2}(x)=(x-\beta _{1})P_{1}(x)-\gamma _{1}^{1}P_{0}(x)$.

We follow \cite[Def. 4.1.]{DM-JAT-82} in assuming that a monic system of
polynomials $\{S_{n}\}$ is said to be $d$\textit{--symmetric} when it
verifies%
\begin{equation}
S_{n}(\xi _{k}x)=\xi _{k}^{n}S_{n}(x)\,,\ \ n\geq 0\,,
\label{[Sec2]-d-symmetric-Sn}
\end{equation}%
where $\xi _{k}=\exp \left( {2k\pi i}/({d+1})\right) $, $k=1,\ldots ,d$, and 
$\xi _{k}^{d+1}=1$. Notice that, if $d=1$, then $\xi _{k}=-1$ and therefore $%
S_{n}(-x)=(-1)^{n}S_{n}(x)$ (see~\cite{Chi78}). We also assume (see \cite[%
Def. 4.2.]{DM-JAT-82}) that the vector of linear functionals $\mathcal{L}%
_{0}=%
\begin{bmatrix}
\ell _{0} & \cdots & \ell _{d-1}%
\end{bmatrix}%
^{T}$ is said to be $d$\textit{--symmetric} when the moments of its entries
satisfy, for every $n\geq 0$,%
\begin{equation}
\ell _{\nu }(x^{(d+1)n+\mu })=0\,,\ \ \nu =0,1,\ldots ,d-1\,,\ \ \mu
=0,1,\ldots ,d\,,\ \ \nu \neq \mu \,.  \label{[Sec2]-momL1}
\end{equation}%
Observe that if $d=1$, this condition leads to the well known fact $\ell
_{0}(x^{2n+1})=0$, i.e., all the odd moments of a symmetric moment
functional are zero (see~\cite[Def. 4.1, p.20]{Chi78}).

Under the above assumptions, we have the following

\begin{teo}[{cf. \protect\cite[Th. 4.1]{DM-JAT-82}}]
\label{[SEC2]-TH41-DMJAT82} For every sequence of monic polynomials $%
\{S_{n}\}$, $d$--orthogonal with respect to the \textit{vector of linear
functionals }$\mathcal{L}_{0}=%
\begin{bmatrix}
\ell _{0} & \cdots & \ell _{d-1}%
\end{bmatrix}%
^{T}$, the following statements are equivalent:

\begin{itemize}
\item[$(a)$] The vector of linear functionals $\mathcal{L}_{0}$ is $d$%
--symmetric.

\item[$(b)$] The sequence $\{S_{n}\}$ is $d$--symmetric.

\item[$(c)$] The sequence $\{S_{n}\}$ satisfies%
\begin{equation}
xS_{n+d}(x)=S_{n+d+1}(x)+\gamma _{n+1}S_{n}(x)\,,\ \ n\geq 0,
\label{[Sec2]-HTRR-Symm}
\end{equation}%
with $S_{n}(x)=x^{n}$ for $0\leq n\leq d$.
\end{itemize}
\end{teo}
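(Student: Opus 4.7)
My plan is to establish the three-way equivalence by proving the chain $(a)\Rightarrow (b)$, $(b)\Leftrightarrow (c)$, and $(b)\Rightarrow (a)$, with the arithmetic of $(d+1)$-th roots of unity doing the heavy lifting at every step.

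For $(a)\Rightarrow (b)$, I would define $T_n(x):=\xi_k^{-n}S_n(\xi_k x)$, a monic polynomial of degree $n$, and verify that it satisfies exactly the same $d$-orthogonality conditions as $S_n$ with respect to $\mathcal{L}_0$. Expanding $S_n(x)=\sum_j a_{n,j}x^j$ and pushing $\ell_\nu$ inside, the only terms that survive in $\ell_\nu(x^{k+j})$ are those with $k+j\equiv \nu\pmod{d+1}$ by (\ref{[Sec2]-momL1}); all such terms contribute the common factor $\xi_k^{\nu-k}$, so $\ell_\nu(x^k T_n(x))=\xi_k^{\nu-k-n}\,\ell_\nu(x^k S_n(x))$. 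The right-hand side vanishes exactly when the original orthogonality condition does, and regularity of $\mathcal{L}_0$ then forces $T_n=S_n$, which is precisely (b).

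For $(b)\Rightarrow (c)$, I would substitute $x\mapsto \xi_k x$ in the general $(d+2)$-term recurrence (\ref{[Sec2]-HTRR-MultOP}) and apply $S_m(\xi_k x)=\xi_k^m S_m(x)$. After dividing by $\xi_k^{n+d+1}$, matching coefficients against the original recurrence yields $\beta_{n+d}(1-\xi_k^{-1})=0$ and $\gamma_{n+d-\nu}^{d-1-\nu}(1-\xi_k^{-2-\nu})=0$ for every admissible $k$. Choosing $\xi_k$ a primitive $(d+1)$-th root of unity forces $\beta_{n+d}=0$ and $\gamma_{n+d-\nu}^{d-1-\nu}=0$ unless $(d+1)\mid (2+\nu)$; since $\nu\in\{0,\dots,d-1\}$ this singles out $\nu=d-1$, giving precisely (\ref{[Sec2]-HTRR-Symm}) with $\gamma_{n+1}:=\gamma_{n+1}^0$. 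The initial conditions $S_n(x)=x^n$ for $0\le n\le d$ come from writing $S_n=\sum_j a_{n,j}x^j$ and using $d$-symmetry to deduce $a_{n,j}(\xi_k^j-\xi_k^n)=0$, so $a_{n,j}=0$ whenever $j\not\equiv n\pmod{d+1}$; in the range $j<n\le d$ the gap is strictly less than $d+1$ and this forces $a_{n,j}=0$. The converse $(c)\Rightarrow (b)$ would be handled by a short induction on $n$: the base cases $0\le n\le d$ are evident from $S_n(x)=x^n$, and the inductive step reads off directly from (\ref{[Sec2]-HTRR-Symm}) using $\xi_k^{d+1}=1$.

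For $(b)\Rightarrow (a)$, I would invert the unitriangular change of basis and write $x^m=\sum_n b_{m,n}S_n(x)$. The sparsity $a_{n,j}=0$ for $j\not\equiv n\pmod{d+1}$ established above is preserved under inversion within the algebra of lower-triangular matrices, so $b_{m,n}=0$ unless $m\equiv n\pmod{d+1}$. Identifying the entries of $\mathcal{L}_0$ with the first $d$ members of the dual sequence, i.e., $\ell_\nu(S_n)=\delta_{\nu,n}$, then gives $\ell_\nu(x^m)=b_{m,\nu}$, which vanishes whenever $m\not\equiv \nu\pmod{d+1}$; this is exactly (\ref{[Sec2]-momL1}). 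The main obstacle I anticipate is this last step, specifically the bookkeeping that justifies both the sparsity of the inverse matrix and the duality identification $\ell_\nu(S_n)=\delta_{\nu,n}$ in the present normalization; once these are settled, the remainder of the argument reduces to simple congruence arithmetic modulo $d+1$.
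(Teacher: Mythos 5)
This theorem is one the paper deliberately states \emph{without} proof: it is quoted as Th.~4.1 of Douak--Maroni \cite{DM-JAT-82}, and Section~\ref{[Section-2]-Defs} is explicitly a summary of background material. So there is no internal proof to compare against; what you have written is, in substance, a correct reconstruction of the standard Douak--Maroni argument. Your three steps are sound: the computation $\ell _{\nu }(x^{m}T_{n})=\xi ^{\nu -m-n}\ell _{\nu }(x^{m}S_{n})$ for $T_{n}(x)=\xi ^{-n}S_{n}(\xi x)$ does give $(a)\Rightarrow (b)$ once you invoke uniqueness of the monic $d$--orthogonal sequence (which regularity supplies); the coefficient comparison after $x\mapsto \xi x$ in (\ref{[Sec2]-HTRR-MultOP}) correctly isolates $\nu =d-1$ as the only surviving term, since $2+\nu \in \{2,\dots ,d+1\}$ hits a multiple of $d+1$ exactly once; and the sparsity-plus-inversion argument for $(b)\Rightarrow (a)$ works because lower unitriangular matrices supported on $j\equiv n\pmod{d+1}$ form a group. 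Two small points deserve attention. First, the identification $\ell _{\nu }(S_{n})=\delta _{\nu ,n}$ that you flag as the main obstacle is not an obstacle here at all: in this paper's normalization $\mathcal{L}_{0}$ is by definition the first block of the dual sequence of $\{S_{n}\}$ (see the discussion around (\ref{[Sec2]-vecLinFrm})), so the duality relation is built into the hypotheses rather than something to be proved; in the original Douak--Maroni setting one would instead use that the orthogonality functionals are triangular combinations of the dual functionals. Second, you use the letter $k$ simultaneously for the index of the root of unity $\xi _{k}$ and for the exponent in $x^{k}$; the argument survives, but the displayed identity should be written with distinct symbols to avoid the appearance of a dependence between the two.
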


Notice that (\ref{[Sec2]-HTRR-Symm}) is a particular case of\ the $(d+2)$%
--term recurrence relation (\ref{[Sec2]-HTRR-MultOP}). Continuing the same
trivial example above for $d=2$, it directly implies that the sequence of
polynomials $\{S_{n}\}$, satisfy the particular case of~(\ref%
{[Sec2]-HTRR-MultOP}), i.e. $S_{0}(x)=1$, $S_{1}(x)=1$, $S_{2}(x)=x^{2}$ and%
\begin{equation*}
xS_{n+2}(x)=S_{n+3}(x)+\gamma _{n+1}S_{n}(x)\,,\ \ n\geq 0.
\end{equation*}%
Notice that the coefficients $\beta _{n+2}$ and $\gamma _{n+2}^{1}$ of
polynomials $S_{n+2}$ and $S_{n+1}$ respectively, on the right hand side of (%
\ref{[Sec2]-4TRR-P}) are zero.

On the other hand, the $(d+2)$--term recurrence relation (\ref%
{[Sec2]-HTRR-Symm}) can be rewritten in terms of vector polynomials (\ref%
{[Sec2]-vecPn}), and then we obtain what will be referred to as the \textit{%
symmetric type~II vector multiple orthogonal polynomial sequence} $\mathcal{S%
}_{n}=%
\begin{bmatrix}
S_{nd} & \cdots & S_{(n+1)d-1}%
\end{bmatrix}%
^{T}$. For $n\rightarrow dn+j$, $j=0,1,\ldots d-1$ and $n\in \mathbb{N}$, we
have the following matrix three term recurrence relation%
\begin{equation}
x\mathcal{S}_{n}=A\mathcal{S}_{n+1}+B\mathcal{S}_{n}+C_{n}\mathcal{S}%
_{n-1}\,,\ \ n=0,1,\ldots  \label{[Sec2]-3TRR-S}
\end{equation}%
with $\mathcal{S}_{-1}=%
\begin{bmatrix}
0 & \cdots & 0%
\end{bmatrix}%
^{T}$, $\mathcal{S}_{0}=%
\begin{bmatrix}
S_{0} & \cdots & S_{d-1}%
\end{bmatrix}%
^{T}$, and matrix coefficients $A$, $B$, $C_{n}\in \mathcal{M}_{d\times d}$
given%
\begin{eqnarray}
A &=&%
\begin{bmatrix}
0 & 0 & \cdots & 0 \\ 
\vdots & \vdots & \ddots & \vdots \\ 
0 & 0 & \cdots & 0 \\ 
1 & 0 & \cdots & 0%
\end{bmatrix}%
\,,\ B=%
\begin{bmatrix}
0 & 1 &  &  \\ 
& \ddots & \ddots &  \\ 
&  & 0 & 1 \\ 
&  &  & 0%
\end{bmatrix}%
,\ \ \mbox{ and }  \label{[Sec2]-3TTR-MCoefs} \\
C_{n} &=&\operatorname{diag}\,[\gamma _{1},\,\gamma _{2},\ldots ,\gamma _{nd}]. 
\notag
\end{eqnarray}%
Note that, in this case, one has%
\begin{equation*}
\mathcal{S}_{0}=\mathcal{X}_{0}=%
\begin{bmatrix}
1 & x & \cdots & x^{d-1}%
\end{bmatrix}%
^{T}.
\end{equation*}%
Since $\{S_{n}\}$\ satisfies (\ref{[Sec2]-HTRR-Symm}), it is clear that this 
$d$--symmetric type~II multiple polynomial sequence will be orthogonal with
respect to certain system of $d$ linear functionals, say $\{v^{1},\ldots
,v^{d}\}$. Hence, according to the matrix interpretation of multiple
orthogonality, the corresponding type II vector multiple polynomial sequence 
$\{\mathcal{S}_{n}\}$ will be orthogonal with respect to a \textit{symmetric
vector of functionals }$\mathcal{V}=%
\begin{bmatrix}
v^{1} & \cdots & v^{d}%
\end{bmatrix}%
^{T}$. The corresponding matrix orthogonality conditions for $\{\mathcal{S}%
_{n}\}$ and $\mathcal{V}$ are described in (\ref{[Sec2]-OrthCond-U}).

One of the main goals of this manuscript is to analyze symmetric sequences
of type~II vector multiple polynomials, orthogonal with respect to a
symmetric vector of functionals. The remainder of this section will be
devoted to the proof of one of our main results concerning the moments of the%
$\ d$ functional entries of such symmetric vector of functionals $\mathcal{V}
$. The following theorem states that, under certain conditions, the moment
of each functional entry in $\mathcal{V}$ can be given in terms of the
moments of other functional entry in the same $\mathcal{V}$.

\begin{lema}
\label{[SEC2]-TH2-MOM-v1v2} If $\mathcal{V}=%
\begin{bmatrix}
v^{1} & \cdots & v^{d}%
\end{bmatrix}%
^{T}$ is a symmetric vector of functionals, the moments of each functional
entry $v^{j}$, $j=1,2,\ldots ,d$ in $\mathcal{V}$, can be expressed for all $%
n\geq 0$ as

\begin{itemize}
\item[$(i)$] If $\mu =0,1,\ldots ,j-2$%
\begin{equation}
v^{j}(x^{(d+1)n+\mu })=\frac{v_{j,\mu }}{v_{\mu +1,\mu }}v^{\mu
+1}(x^{(d+1)n+\mu }),  \label{[Sec2]-momVsym}
\end{equation}%
where $v_{k,l}=v^{k}(S_{l})$.

\item[$(ii)$] If $\mu =j-1$, the value $v^{j}(x^{(d+1)n+\mu })$ depends on $%
v^{j}$, and it is different from zero.

\item[$(iii)$] If $\mu =j,j+1,\ldots ,d$%
\begin{equation*}
v^{j}(x^{(d+1)n+\mu })=0.
\end{equation*}
\end{itemize}
\end{lema}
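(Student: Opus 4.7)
Plan. My strategy is to decompose moments of $v^j$ using the $(d{+}1)$-periodic structure of the symmetric sequence $\{S_n\}$, truncate the resulting expansion via the matrix orthogonality conditions, and finally read off the three cases from the upper-triangular structure of $\Delta_0 = \mathcal{V}(\mathcal{S}_0)$.

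By Theorem~\ref{[SEC2]-TH41-DMJAT82} the $d$-symmetry $S_n(\xi_k x) = \xi_k^n S_n(x)$ forces each $S_n$ to be a combination of monomials $x^{n-k(d+1)}$ with $k \geq 0$. Hence, for any fixed $\mu \in \{0,\ldots,d\}$, the sequence $\{S_{\mu+k(d+1)}\}_{k \geq 0}$ is a basis of $\operatorname{span}\{x^{\mu+k(d+1)} : k \geq 0\}$, and for $m = (d+1)n + \mu$ I can expand
\[
x^{(d+1)n+\mu} \;=\; S_{(d+1)n+\mu}(x) + \sum_{k=0}^{n-1} c_{n,k}^{(\mu)}\, S_{\mu+k(d+1)}(x),
\]
with coefficients $c_{n,k}^{(\mu)}$ obtained by iterating the recurrence (\ref{[Sec2]-HTRR-Symm}), namely $S_{n+d+1}(x) = xS_{n+d}(x) - \gamma_{n+1} S_n(x)$. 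Crucially these coefficients depend only on $n$, $\mu$, and the $\gamma_k$, not on the functional eventually applied.

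Applying $v^j$ and using the matrix orthogonality conditions (\ref{[Sec2]-OrthCond-U}) to conclude $v^j(S_\ell) = 0$ for every $\ell \geq d$ (for $\ell = d$ via $S_d = x^d$ together with orthogonality of $\mathcal{S}_1$), every term with $k \geq 1$ satisfies $\mu + k(d+1) \geq d+1$ and thus vanishes, leaving
\[
v^j\!\left(x^{(d+1)n+\mu}\right) \;=\; c_{n,0}^{(\mu)}\, v^j(S_\mu) \;=\; c_{n,0}^{(\mu)}\, v_{j,\mu}.
\]
The matrix $\Delta_0 = \mathcal{V}(\mathcal{S}_0) = [v_{j,i-1}]_{i,j=1}^d$ is upper triangular with nonzero diagonal by regularity, so $v_{j,j-1}\neq 0$ and $v_{j,\mu}=0$ whenever $\mu \geq j$. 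Case~(iii) is then immediate. Case~(i) follows by writing the same identity for $v^{\mu+1}$, giving $v^{\mu+1}(x^{(d+1)n+\mu}) = c_{n,0}^{(\mu)}\, v_{\mu+1,\mu}$ with $v_{\mu+1,\mu}\neq 0$, and dividing to eliminate the common factor $c_{n,0}^{(\mu)}$. For case~(ii) the identity reduces to $c_{n,0}^{(j-1)}\, v_{j,j-1}$, manifestly depending on $v^j$ through the nonzero scalar $v_{j,j-1}$.

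The most delicate point is the nonvanishing asserted in (ii), which requires $c_{n,0}^{(j-1)} \neq 0$ for every $n$. Unlike the scalar case, where this quantity factors as a pure product of nonzero $\gamma_k$'s, here it is a more intricate polynomial in the $\gamma_k$ (for instance sums such as $\gamma_1+\gamma_2$ already appear at low orders). The cleanest way to dispose of this is to invoke the regularity of $\mathcal{V}$ itself: nonsingularity of the block Hankel minors $\mathcal{H}_n$, translated through the expansion above, forces $c_{n,0}^{(\mu)}\neq 0$ at every step. With this technical point settled, all three conclusions of the lemma drop out of the single displayed identity.
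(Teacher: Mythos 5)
Your argument is, at bottom, the same as the paper's, only phrased in the monomial basis instead of the dual basis: the coefficient $c_{n,0}^{(\mu)}$ in your expansion is precisely $\ell_{\mu}(x^{(d+1)n+\mu})$ for the dual sequence $\{\ell_k\}$ of $\{S_n\}$, and your single identity $v^{j}(x^{(d+1)n+\mu})=c_{n,0}^{(\mu)}v_{j,\mu}$ is exactly what the paper obtains by writing $\mathcal{V}=G_{0}\mathcal{L}_{0}$ with $G_{0}=(\mathcal{V}_{0}^{0})^{T}$ lower triangular, i.e. $v^{j}=\sum_{i=0}^{j-1}v_{j,i}\ell_{i}$, and then invoking the moment conditions (\ref{[Sec2]-momL1}). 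Your derivation of the period-$(d+1)$ support of $S_n$ from (\ref{[Sec2]-d-symmetric-Sn}), the truncation via $\mathcal{V}(\mathcal{S}_m)=0_{d\times d}$ for $m\geq 1$, and the use of the upper-triangular $\mathcal{V}_{0}^{0}$ are all sound; cases $(i)$ and $(iii)$ are complete (and $(i)$ holds even when $c_{n,0}^{(\mu)}=0$, since both sides then vanish).

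The step that does not go through is your disposal of the ``delicate point'' in $(ii)$. Regularity of the block Hankel matrices $\mathcal{H}_n$ does not force $c_{n,0}^{(\mu)}=\ell_{\mu}(x^{(d+1)n+\mu})\neq 0$: nonsingularity of the principal minors constrains determinants of moments, not the individual moments. Already for $d=1$ one can take a symmetric quasi-definite functional with $m_{0}=m_{2}=1$, $m_{4}=0$, $m_{6}=1$, all odd moments zero; the Hankel determinants are $\Delta_{1}=1$, $\Delta_{2}=-1$, $\Delta_{3}=-1,\ldots$, all nonzero, yet $\ell_{0}(x^{4})=m_{4}/m_{0}=0$, so the quantity you need to be nonzero vanishes while the sequence $\{S_n\}$ remains regular; nothing in the structure for $d\geq 2$ rules out the analogous degeneracy. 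To be fair, the paper's own proof simply asserts $\ell_{0}(x^{(d+1)n})\neq 0$ at the corresponding point with no more justification, so you have isolated a genuine weak spot of the lemma rather than introduced a new one; but your proposed repair via the $\mathcal{H}_n$ is not a proof, and the nonvanishing in $(ii)$ should either be taken as an extra hypothesis (e.g. positive definiteness of the underlying functionals) or flagged as an unproved assertion, as it implicitly is in the paper.
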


\begin{proof}
In the matrix framework of multiple orthogonality, the type~II vector
polynomials $\mathcal{S}_{n}$ are multiple orthogonal with respect to the
symmetric vector moment of functionals $\mathcal{V}:\mathbb{P}%
^{d}\rightarrow \mathcal{M}_{d\times d}$, with $\mathcal{V}=%
\begin{bmatrix}
v^{1} & \cdots & v^{d}%
\end{bmatrix}%
^{T}$. If we multiply (\ref{[Sec2]-3TRR-S}) by~$x^{n-1}$, together with the
linearity of $\mathcal{V}$, we get, for $n=0,1,\ldots $ 
\begin{equation*}
\mathcal{V}\left( x^{n}\mathcal{S}_{n}\right) =A\mathcal{V}\left( x^{n-1}%
\mathcal{S}_{n+1}\right) +B\mathcal{V}\left( x^{n-1}\mathcal{S}_{n}\right)
+C_{n}\mathcal{V}\left( x^{n-1}\mathcal{S}_{n-1}\right) \,,\,\,n=0,1,\ldots .
\end{equation*}%
By the orthogonality conditions (\ref{[Sec2]-OrthCond-U}) for $\mathcal{V}$,
we have%
\begin{equation*}
A\mathcal{V}\left( x^{n-1}\mathcal{S}_{n+1}\right) =B\mathcal{V}\left(
x^{n-1}\mathcal{S}_{n}\right) =0_{d\times d}\,,\ \ n=0,1,\ldots \,,
\end{equation*}%
and iterating the remain expression%
\begin{equation*}
\mathcal{V}\left( x^{n}\mathcal{S}_{n}\right) =C_{n}\mathcal{V}\left( x^{n-1}%
\mathcal{S}_{n-1}\right) \,,\ \ n=0,1,\ldots
\end{equation*}%
we obtain%
\begin{equation*}
\mathcal{V}\left( x^{n}\mathcal{S}_{n}\right) =C_{n}C_{n-1}\cdots C_{1}%
\mathcal{V}\left( \mathcal{S}_{0}\right) \,,\ \ n=0,1,\ldots .
\end{equation*}%
The above matrix $\mathcal{V}\left( \mathcal{S}_{0}\right) $ is given by%
\begin{equation*}
\mathcal{V}\left( \mathcal{S}_{0}\right) =\mathcal{V}_{0}^{0}=%
\begin{bmatrix}
v^{1}(S_{0}) & \cdots & v^{d}(S_{0}) \\ 
\vdots & \ddots & \vdots \\ 
v^{1}(S_{d-1}) & \cdots & v^{d}(S_{d-1})%
\end{bmatrix}%
.
\end{equation*}%
To simplify the notation, in the sequel $v_{i,j-1}$ denotes $v^{i}(S_{j-1})$%
. Notice that (\ref{[Sec2]-OrthCond-U})\ leads to the fact that the above
matrix is an upper triangular matrix, which in turn means that $v_{i,j-1}=0$%
, for every $i,j=1,\ldots ,d-1$, that is%
\begin{equation}
\mathcal{V}_{0}^{0}=%
\begin{bmatrix}
v_{1,0} & v_{2,0} & \cdots & v_{d,0} \\ 
& v_{2,1} & \cdots & v_{d,1} \\ 
&  & \ddots & \vdots \\ 
&  &  & v_{d,d-1}%
\end{bmatrix}%
.  \label{[Sec2]-V00}
\end{equation}%
Let $\mathcal{L}_{0}$ be a $d$--symmetric vector of linear functionals as in
Theorem \ref{[SEC2]-TH41-DMJAT82}. We can express $\mathcal{V}$ in terms of $%
\mathcal{L}_{0}$ as $\mathcal{V}=G_{0}\mathcal{L}_{0}$. Thus, we have%
\begin{equation*}
(G_{0}^{-1}\,\mathcal{V})(\mathcal{S}_{0})=\mathcal{L}_{0}(\mathcal{S}%
_{0})=I_{d}\,.
\end{equation*}%
From (\ref{[Sec2]-Def-LeftMultp}) we have%
\begin{equation*}
\left( G_{0}^{-1}\,\mathcal{V}\right) (\mathcal{S}_{0})=\mathcal{V}\left( 
\mathcal{S}_{0}\right) (G_{0}^{-1})^{T}=\mathcal{V}%
_{0}^{0}(G_{0}^{-1})^{T}=I_{d}\,.
\end{equation*}%
Therefore, taking into account (\ref{[Sec2]-V00}), we conclude%
\begin{equation}
G_{0}=(\mathcal{V}_{0}^{0})^{T}=%
\begin{bmatrix}
v_{1,0} &  &  &  \\ 
v_{2,0} & v_{2,1} &  &  \\ 
\vdots & \vdots & \ddots &  \\ 
v_{d,0} & v_{d,1} & \cdots & v_{d,d-1}%
\end{bmatrix}%
.  \label{[Sec2]-G0}
\end{equation}%
Observe that the matrix $(\mathcal{V}_{0}^{0})^{T}$ is lower triangular, and
every entry in their main diagonal is different from zero, so $G_{0}$ always
exists and is a lower triangular matrix. Since $\mathcal{V}=G_{0}\mathcal{L}%
_{0}$, we finally obtain the expressions%
\begin{equation}
\begin{array}{l}
v^{1}=v_{1,0}\ell _{0}, \\ 
v^{2}=v_{2,0}\ell _{0}+v_{2,1}\ell _{1}, \\ 
v^{3}=v_{3,0}\ell _{0}+v_{3,1}\ell _{1}+v_{3,2}\ell _{2}, \\ 
\cdots \\ 
v^{d}=v_{d,0}\ell _{0}+v_{d,1}\ell _{1}+\cdots +v_{d,d-1}\ell _{d-1},%
\end{array}
\label{[Sec2]-RelEntreFors}
\end{equation}%
between the entries of $\mathcal{L}_{0}$\ and $\mathcal{V}$.

Next, from (\ref{[Sec2]-RelEntreFors}), (\ref{[Sec2]-momL1}),\ together with
the crucial fact that every value in the main diagonal of $\mathcal{V}%
_{0}^{0}$ is different from zero, it is a simple matter to check that the
three statements of the lemma follow. We conclude the proof only for the
functionals $v^{2}$ and $v^{1}$. The other cases can be deduced in a similar
way.

From (\ref{[Sec2]-RelEntreFors}) we get $v^{1}(x^{(d+1)n+\mu })=v_{1,0}\cdot
\ell _{0}(x^{(d+1)n+\mu })\neq 0$. Then, from (\ref{[Sec2]-momL1}) we see
that for every $\mu \neq 0$ we have $v^{1}(x^{(d+1)n+\mu })=0$\ (statement $%
(iii)$).\ If $\mu =0$, we have $v^{1}(x^{(d+1)n})=v_{1,0}\cdot \ell
_{0}(x^{(d+1)n})\neq 0$ (statement $(ii)$).\ Next, from (\ref%
{[Sec2]-RelEntreFors}) we get $v^{2}(x^{(d+1)n+\mu })=v_{2,0}\cdot \ell
_{0}(x^{(d+1)n+\mu })+v_{2,1}\cdot \ell _{1}(x^{(d+1)n+\mu })$. Then, from (%
\ref{[Sec2]-momL1}) we see that for every $\mu \neq 1$, we have $%
v^{2}(x^{(d+1)n+\mu })=0$ (statement $(iii)$). If $\mu =0$ we have%
\begin{equation*}
v^{2}(x^{(d+1)n})=\frac{v_{2,0}}{v_{1,0}}v^{1}(x^{(d+1)n})\,\,\,\text{%
(statement }(i)\text{).}
\end{equation*}%
If $\mu =1$ then\ $v^{2}(x^{(d+1)n+1})=v_{2,1}\cdot \ell
_{1}(x^{(d+1)n+1})\neq 0$ (statement $(ii)$).

Thus, the lemma follows.
\end{proof}


\section{Representation of the Stieltjes and Weyl functions}

\label{[Section-3]-FyR-Functions}


Let $\mathcal{U}$ a vector of functionals $\mathcal{U}=%
\begin{bmatrix}
u^{1} & \cdots & u^{d}%
\end{bmatrix}%
^{T}$. We define the \textit{Stieltjes matrix function }associated to $%
\mathcal{U}$ (or \textit{matrix generating function} associated to $\mathcal{%
U}$), $\mathcal{F}$ by (see \cite{BCF-NA-10})%
\begin{equation*}
\mathcal{F}(z)=\sum_{n=0}^{\infty }\frac{\left( x^{n}\mathcal{U}\right) (%
\mathcal{X}_{0}(x))}{z^{n+1}}.
\end{equation*}

In this Section we find the relation between the Stieltjes matrix function $%
\mathcal{F}$, associated to a certain $d$--symmetric vector of functionals $%
\mathcal{V}$, and certain interesting function associated to the
corresponding block Jacobi matrix $J$. Here we deal with $d$--symmetric
sequences of type~II multiple orthogonal polynomials $\{S_{n}\}$, and hence $%
J$ is a $(d+2)$--banded matrix with only two extreme non-zero diagonals,
which is the block--matrix representation of the three--term recurrence
relation with $d\times d$ matrix coefficients, satisfied by the vector
sequence of polynomials $\{\mathcal{S}_{n}\}$ (associated to $\{S_{n}\}$),
orthogonal with respect to $\mathcal{V}$.

Thus, the shape of a Jacobi matrix $J$, associated with the $(d+2)$--term
recurrence relation (\ref{[Sec2]-HTRR-Symm}) satisfied by a $d$--symmetric
sequence of type~II multiple orthogonal polynomials $\{S_{n}\}$ is%
\begin{equation}
J=%
\begin{bmatrix}
0 & 1 &  &  &  &  &  \\ 
0 & 0 & 1 &  &  &  &  \\ 
\vdots &  & \ddots & \ddots &  &  &  \\ 
0 &  &  & 0 & 1 &  &  \\ 
\gamma _{1} & 0 &  &  & 0 & 1 &  \\ 
& \gamma _{2} & 0 &  &  & 0 & \ddots \\ 
&  & \ddots & \ddots &  &  & \ddots%
\end{bmatrix}%
.  \label{[Sec3]-JacMatxJ}
\end{equation}%
We can rewrite $J$ as the block tridiagonal matrix%
\begin{equation*}
J=%
\begin{bmatrix}
B & A &  &  &  \\ 
C_{1} & B & A &  &  \\ 
& C_{2} & B & A &  \\ 
&  & \ddots & \ddots & \ddots%
\end{bmatrix}%
\end{equation*}%
associated to a three term recurrence relation with matrix coefficients,
satisfied by the sequence of type~II vector multiple orthogonal polynomials $%
\{\mathcal{S}_{n}\}$ associated to $\{S_{n}\}$. Here, every block matrix $A$%
, $B$ and $C_{n}$ has $d\times d$ size, and they are given in (\ref%
{[Sec2]-3TTR-MCoefs}). When $J$ is a bounded operator, it is possible to
define the \textit{resolvent operator} by%
\begin{equation*}
(zI-J)^{-1}=\sum_{n=0}^{\infty }\frac{J^{n}}{z^{n+1}},\,\,\,|z|>||J||,
\end{equation*}%
(see \cite{B-JAT-99}) and we can put in correspondence the following block
tridiagonal analytic function, known as the \textit{Weyl function associated
with }$J$%
\begin{equation}
\mathcal{R}_{J}(z)=\sum_{n=0}^{\infty }\frac{\mathbf{e}_{0}^{T}\,J^{n}\,%
\mathbf{e}_{0}}{z^{n+1}},\,\,\,|z|>||J||,  \label{[Sec3]-R-Weyl}
\end{equation}%
where $\mathbf{e}_{0}=%
\begin{bmatrix}
I_{d} & 0_{d\times d} & \cdots%
\end{bmatrix}%
^{T}$. If we denote by $M_{ij}$ the $d\times d$\ block matrices of a
semi-infinite matrix $M$, formed by the entries of rows $d(i-1)+1,d(i-1)+2,%
\ldots ,di\,$, and columns $d(j-1)+1,d(j-1)+2,\ldots ,dj\,$, the matrix $%
J^{n}$ can be written as the semi-infinite block matrix%
\begin{equation*}
J^{n}=%
\begin{bmatrix}
J_{11}^{n} & J_{12}^{n} & \cdots \\ 
J_{21}^{n} & J_{22}^{n} & \cdots \\ 
\vdots & \vdots & \ddots%
\end{bmatrix}%
.
\end{equation*}%
We can now formulate our first important result in this Section. For more
details we refer the reader to \cite[Sec. 1.2]{BBF-JMAA-10} and \cite%
{AKI-CA-00}.

Let $\{\mathcal{S}_{n}\}$\ be a symmetric type~II vector multiple polynomial
sequence orthogonal with respect to the $d$--symmetric vector of functionals 
$\mathcal{V}$. Following \cite[Th. 7]{BCF-NA-10}, the matrix generating
function associated to $\mathcal{V}$, and the Weyl function associated with $%
J$, the block Jacobi matrix corresponding to $\{\mathcal{S}_{n}\}$, can be\
put in correspondence by means of the matrix expression%
\begin{equation}
\mathcal{F}(z)=R_{J}(z)\mathcal{V}(\mathcal{X}_{0})\,,  \label{[Sec3]-FyRV00}
\end{equation}%
where, for the $d$--symmetric case, $\mathcal{V}(\mathcal{X}_{0})=\mathcal{V}%
(\mathcal{S}_{0})=\mathcal{V}_{0}^{0}$, which is explicitly given in (\ref%
{[Sec2]-V00}).

First we study the case $d=2$, and next we consider more general situations
for $d>2$ functional entries in $\mathcal{V}$. From Lemma \ref%
{[SEC2]-TH2-MOM-v1v2}, we obtain the entries for the representation of the
Stieltjes matrix function $\mathcal{F}(z)$, associated with~$\mathcal{V}=%
\begin{bmatrix}
v^{1} & v^{2}%
\end{bmatrix}%
$, as%
\begin{multline}
\mathcal{F}(z)=\sum_{n=0}^{\infty }{%
\begin{bmatrix}
v^{1}(x^{3n}) & \frac{v_{2,0}\cdot v^{1}(x^{3n})}{v_{1,0}} \\ 
0 & {v^{2}(x^{3n+1})}%
\end{bmatrix}%
}/{z^{3n+1}}+\sum_{n=0}^{\infty }{%
\begin{bmatrix}
{0} & {v^{2}(x^{3n+1})} \\ 
0 & {0}%
\end{bmatrix}%
}/{z^{3n+2}}  \label{[Sec2]-DescFen3} \\
+\sum_{n=0}^{\infty }{%
\begin{bmatrix}
{0} & {0} \\ 
v^{1}(x^{3n+3}) & \frac{v_{2,0}\cdot v^{1}(x^{3n+3})}{v_{1,0}}%
\end{bmatrix}%
}/{z^{3n+3}}\,.
\end{multline}%
Notice that we have $\mathcal{F}(z)=\mathcal{F}_{1}(z)+\mathcal{F}_{2}(z)+%
\mathcal{F}_{3}(z)$. The following theorem shows that we can obtain $%
\mathcal{F}(z)$ in our particular case, analyzing two different situations.

\begin{teo}
\label{[SEC2]-TEO-Fv2}Let $\mathcal{V}=%
\begin{bmatrix}
v^{1} & v^{2}%
\end{bmatrix}%
^{T}$ be a symmetric vector of functionals, with $d=2$. Then the Weyl
function is given by 
\begin{equation*}
\mathcal{R}_{J}(z)=\sum_{n=0}^{\infty }\frac{%
\begin{bmatrix}
\frac{v^{1}(x^{3n})}{v_{1,0}} & 0 \\ 
0 & \frac{v^{2}(x^{3n+1})}{v_{2,1}}%
\end{bmatrix}%
}{z^{3n+1}}+\sum_{n=0}^{\infty }\frac{%
\begin{bmatrix}
{0} & \frac{v^{2}(x^{3n+1})}{v_{2,1}} \\ 
0 & {0}%
\end{bmatrix}%
}{z^{3n+2}}+\sum_{n=0}^{\infty }\frac{%
\begin{bmatrix}
{0} & {0} \\ 
\frac{v^{1}(x^{3n+3})}{v_{1,0}} & 0%
\end{bmatrix}%
}{z^{3n+3}}
\end{equation*}
\end{teo}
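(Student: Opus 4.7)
The plan is to invert the fundamental matrix identity $\mathcal{F}(z)=\mathcal{R}_{J}(z)\mathcal{V}(\mathcal{X}_{0})$ recalled in equation (\ref{[Sec3]-FyRV00}). In the $d=2$ case, $\mathcal{V}(\mathcal{X}_{0})=\mathcal{V}_{0}^{0}$ is the $2\times 2$ upper-triangular matrix
\[
\mathcal{V}_{0}^{0}=\begin{bmatrix} v_{1,0} & v_{2,0} \\ 0 & v_{2,1} \end{bmatrix},
\]
with $v_{1,0},v_{2,1}\neq 0$ by (\ref{[Sec2]-V00}), so it is invertible and
\[
(\mathcal{V}_{0}^{0})^{-1}=\begin{bmatrix} 1/v_{1,0} & -v_{2,0}/(v_{1,0}v_{2,1}) \\ 0 & 1/v_{2,1} \end{bmatrix}.
\]
Thus I would simply write $\mathcal{R}_{J}(z)=\mathcal{F}(z)(\mathcal{V}_{0}^{0})^{-1}$ and carry the inverse inside each of the three series $\mathcal{F}_{1}, \mathcal{F}_{2}, \mathcal{F}_{3}$ from (\ref{[Sec2]-DescFen3}).

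The bulk of the proof is then a term-by-term block multiplication. For the series $\mathcal{F}_{1}$ indexed by $z^{-(3n+1)}$, the key cancellation to notice is
\[
-\frac{v^{1}(x^{3n})\,v_{2,0}}{v_{1,0}v_{2,1}}+\frac{v_{2,0}\,v^{1}(x^{3n})}{v_{1,0}}\cdot\frac{1}{v_{2,1}}=0,
\]
which kills the $(1,2)$-entry and leaves the diagonal $\operatorname{diag}(v^{1}(x^{3n})/v_{1,0},\,v^{2}(x^{3n+1})/v_{2,1})$. The same identity, applied to the second row, handles $\mathcal{F}_{3}$ and produces the lower-triangular term $v^{1}(x^{3n+3})/v_{1,0}$ at position $(2,1)$. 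The series $\mathcal{F}_{2}$ is rank-one and only touches the $(1,2)$-block, yielding $v^{2}(x^{3n+1})/v_{2,1}$ at that position. Summing these three resulting series gives exactly the stated expansion for $\mathcal{R}_{J}(z)$.

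The only nontrivial ingredient is the applicability of (\ref{[Sec3]-FyRV00}), which presupposes that $J$ is bounded so the Weyl function is a convergent Neumann series for $|z|>\|J\|$; I would assume this implicitly (as in \cite[Th. 7]{BCF-NA-10}), or equivalently treat the identity formally as equality of matrix-valued Laurent series in $1/z$. I do not expect any genuine obstacle: once (\ref{[Sec3]-FyRV00}) is in hand and $\mathcal{V}_{0}^{0}$ is inverted, the statement is a finite algebraic verification relying on the symmetry relations $v^{2}(x^{3n})=(v_{2,0}/v_{1,0})\,v^{1}(x^{3n})$ already encoded in $\mathcal{F}(z)$ via Lemma \ref{[SEC2]-TH2-MOM-v1v2}.
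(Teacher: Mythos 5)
Your proposal is correct and is exactly the paper's argument: the paper's proof consists precisely of right-multiplying the expansion (\ref{[Sec2]-DescFen3}) by $(\mathcal{V}_{0}^{0})^{-1}$ using (\ref{[Sec3]-FyRV00}), and your explicit block computation (including the cancellation of the $(1,2)$ and $(2,2)$ off-diagonal terms) correctly fills in the details the paper leaves to the reader.
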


\begin{proof}
It is enough to multiply (\ref{[Sec2]-DescFen3}) by $(\mathcal{V}%
_{0}^{0})^{-1}$. The explicit expression for $\mathcal{V}_{0}^{0}$ is given
in (\ref{[Sec2]-V00}).
\end{proof}

Computations considering $d>2$ functionals, can be cumbersome but doable.
The matrix generating function $\mathcal{F}$ (as well as the Weyl function),
will be the sum of $d+1$ matrix terms, i.e. \ $\mathcal{F}(z)=\mathcal{F}%
_{1}(z)+\cdots +\mathcal{F}_{d+1}(z)$, each of them of size $d\times d$.

Let us now outline the very interesting structure of $\mathcal{R}_{J}(z)$
for the general case of $d$ functionals. We shall describe the structure of $%
\mathcal{R}_{J}(z)$ for $d=3$, comparing the situation with the general
case. Let $\ast $ denote every non-zero entry in a given matrix. Thus, there
will be four $J_{11}$\ matrices of size $3\times 3$. Here, and in the
general case, the first matrix $[J_{11}^{(d+1)n}]_{d\times d}$ will allways
be diagonal, as follows%
\begin{equation*}
\lbrack J_{11}^{4n}]_{3\times 3}=%
\begin{bmatrix}
\ast & 0 & 0 \\ 
0 & \ast & 0 \\ 
0 & 0 & \ast%
\end{bmatrix}%
.
\end{equation*}%
Indeed, observe that for $n=0$, $[J_{11}^{(d+1)n}]_{d\times d}=I_{d}$. Next,
we have%
\begin{equation*}
\lbrack J_{11}^{4n+1}]_{3\times 3}=%
\begin{bmatrix}
0 & \ast & 0 \\ 
0 & 0 & \ast \\ 
0 & 0 & 0%
\end{bmatrix}%
.
\end{equation*}%
From (\ref{[Sec2]-HTRR-Symm}) we know that the \textquotedblleft
distance\textquotedblright\ between the two extreme non-zero diagonals of $J$
will allways consist of $d$ zero diagonals. It directly implies that, also
in the general case, every entry in the last row of $[J_{11}^{(d+1)n}]_{d%
\times d}$ will always be zero, and therefore the unique non-zero entries in 
$[J_{11}^{(d+1)n}]_{d\times d}$ will be one step over the main diagonal.

Next we have%
\begin{equation*}
\lbrack J_{11}^{4n+2}]_{3\times 3}=%
\begin{bmatrix}
0 & 0 & \ast \\ 
0 & 0 & 0 \\ 
\ast & 0 & 0%
\end{bmatrix}%
.
\end{equation*}%
Notice that for every step, the main diagonal in $[J_{11}^{(d+1)n}]_{d\times
d}$ goes \textquotedblleft one diagonal\textquotedblright\ up, but the other
extreme diagonal of $J$ is also moving upwards, with exactly $d$ zero
diagonals between them. It directly implies that, no matter the number of
functionals, only the lowest-left element of $[J_{11}^{(d+1)n+2}]_{d\times
d} $ will be different from zero. Finally, we have%
\begin{equation*}
\lbrack J_{11}^{4n+3}]_{3\times 3}=%
\begin{bmatrix}
0 & 0 & 0 \\ 
\ast & 0 & 0 \\ 
0 & \ast & 0%
\end{bmatrix}%
.
\end{equation*}%
Here, the last non-zero entry of the main diagonal in $[J_{11}^{4n}]_{3%
\times 3}$ vanishes. In the general case, it will occur exactly at step $%
[J_{11}^{(d+1)n+(d-1)}]_{d\times d}$, in which only the uppest-right entry
is different from zero. Meanwhile, in matrices $[J_{11}^{(d+1)n+3}]_{d\times
d}$ up to $[J_{11}^{(d+1)n+(d-2)}]_{d\times d}$ will be non-zero entries of
the two extreme diagonals. In this last situation, the non-zero entries of $%
[J_{11}^{(d+1)n+(d-1)}]_{d\times d}$, will always be exactly one step under
the main diagonal.


\section{The symmetrization problem for multiple OP}

\label{[Section-4]-SymProb}


Throughout this section, let $\{A_{n}^{1}\}$\ be an arbitrary and not
necesarily symmetric sequence of type~II multiple orthogonal polynomials,
satisfying a $(d+2)$--term recurrence relation with known recurrence
coefficients, and let $J_{1}$ be the corresponding $(d+2)$--banded matrix.
Let $J_{1}$ be such that the following $LU$ factorization%
\begin{equation}
J_{1}=LU=L_{1}L_{2}\cdots L_{d}U  \label{[Sec4]-xdFactorization}
\end{equation}%
is unique, where $U$ is an upper two--banded, semi--infinite, and invertible
matrix, $L$ is a $(d+1)$--lower triangular semi--infinite with ones in the
main diagonal, and every $L_{i}$, $i=1,\ldots ,d$ is a lower two--banded,
semi--infinite, and invertible matrix with ones in the main diagonal. We
follow \cite[Def. 3]{BBF-JMAA-13}, where the authors generalize the concept
of Darboux transformation to general Hessenberg banded matrices, in assuming
that any circular permutation of $L_{1}L_{2}\cdots L_{d}U$\ is a Darboux
transformation of $J_{1}$. Thus we have $d$ possible Darboux transformations
of $J_{1}$, say $J_{j}$, $j=2,\ldots ,d+1$, with $J_{2}=L_{2}\cdots
L_{d}UL_{1}\,$, $J_{3}=L_{3}\cdots L_{d}UL_{1}L_{2}\,$,\ldots , $%
J_{d+1}=UL_{1}L_{2}\cdots L_{d}\,$.

Next, we solve the so called \textit{symmetrization problem} in the theory
of multiple orthogonal polynomials, i.e., starting with $\{A_{n}^{1}\}$, we
find a total $d+1$ type~II multiple orthogonal polynomial sequences $%
\{A_{n}^{j}\}$, $j=1,\ldots ,d+1$, satisfying $(d+2)$--term recurrence
relation with known recurrence coefficients, which can be used to construct
a new $d$--symmetric type~II multiple orthogonal polynomial sequence $%
\{S_{n}\}$. It is worth pointing out that all the aforesaid sequences $%
\{A_{n}^{j}\}$, $j=1,\ldots ,d+1$\ are of the same kind, with the same
number of elements in their respectives $(d+2)$--term recurrence relations,
and multiple orthogonal with respect to the same number of functionals $d$.

\begin{teo}
\label{Th-Symm-Directo}Let $\{A_{n}^{1}\}$\ be an arbitrary and not
necesarily symmetric sequence of type~II multiple orthogonal polynomials as
stated above. Let $J_{j}$, $j=2,\ldots ,d+1$, be the Darboux transformations
of $J_{1}$ given by the $d$ cyclid permutations of the matrices in the right
hand side of (\ref{[Sec4]-xdFactorization}). Let $\{A_{n}^{j}\}$, $%
j=2,\ldots ,d+1$, $d$ new families of type~II multiple orthogonal
polynomials satisfying $(d+2)$--term recurrence relations given by the
matrices $J_{j}$, $j=2,\ldots ,d+1$. Then, the sequence $\{S_{n}\}$\ defined
by%
\begin{equation}
\left\{ 
\begin{array}{l}
S_{(d+1)n}(x)=A_{n}^{1}(x^{d+1}), \\ 
S_{(d+1)n+1}(x)=xA_{n}^{2}(x^{d+1}), \\ 
\cdots \\ 
S_{(d+1)n+d}(x)=x^{d}A_{n}^{d+1}(x^{d+1}),%
\end{array}%
\right.  \label{[Sec4]-xdDecomp}
\end{equation}%
is a $d$--symmetric sequence of type~II multiple orthogonal polynomials.
\end{teo}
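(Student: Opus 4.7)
The plan is to invoke Theorem~\ref{[SEC2]-TH41-DMJAT82}: once $\{S_n\}$ is shown to be $d$--symmetric in the sense of (\ref{[Sec2]-d-symmetric-Sn}) and to satisfy a recurrence of the shape (\ref{[Sec2]-HTRR-Symm}), the $d$--orthogonality with respect to a $d$--symmetric vector of functionals is automatic. So I would split the proof into (a) the straightforward check of $d$--symmetry, and (b) the extraction of the recurrence coefficients from the cyclic Darboux factorization.

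The $d$--symmetry is immediate from (\ref{[Sec4]-xdDecomp}). Writing $n = (d+1)m + k$ with $0 \leq k \leq d$ and using $\xi^{d+1} = 1$,
\[
S_n(\xi x) = (\xi x)^k A_m^{k+1}((\xi x)^{d+1}) = \xi^{k}\, x^k A_m^{k+1}(x^{d+1}) = \xi^{(d+1)m+k} S_n(x),
\]
which is exactly (\ref{[Sec2]-d-symmetric-Sn}).

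The heart of the matter is the recurrence $x S_{n+d}(x) = S_{n+d+1}(x) + \gamma_{n+1}\,S_n(x)$. Parametrising $n + d = (d+1)m + r$ with $0 \leq r \leq d$, substituting (\ref{[Sec4]-xdDecomp}) and letting $t = x^{d+1}$, the desired identity splits into two kinds of two--term relations among the sequences $\{A_n^j\}$. For $r = 0, 1, \ldots, d-1$ (set $j = r+1$) it becomes
\[
A_{m+1}^{j}(t) = A_{m+1}^{j+1}(t) + \gamma_{n+1}\, A_{m}^{j+1}(t), \qquad j = 1, \ldots, d,
\]
while for the wrap--around case $r = d$ it reads
\[
t\, A_{m}^{d+1}(t) = A_{m+1}^{1}(t) + \gamma_{n+1}\, A_{m}^{1}(t).
\]
Hence the proof reduces to producing such explicit two--term formulas, one coefficient $\gamma_{n+1}$ per value of~$n$.

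These relations are exactly what the cyclic Darboux factorization delivers. From $J_{j} = L_j \cdots L_d U L_1 \cdots L_{j-1}$ one reads off the intertwinings
\[
L_j J_{j+1} = J_j L_j,\quad j = 1, \ldots, d, \qquad U J_1 = J_{d+1} U.
\]
Since $\mathbf{A}^{j} = (A_0^j, A_1^j, \ldots)^T$ is the unique monic polynomial sequence solving $t\mathbf{A}^j = J_j \mathbf{A}^j$, applying each intertwining to $\mathbf{A}^{j+1}$ and using that $L_j$ is lower two--banded with unit diagonal forces $L_j \mathbf{A}^{j+1} = \mathbf{A}^j$ for $j = 1, \ldots, d$; iterating this telescopes to $L_1 L_2 \cdots L_d \mathbf{A}^{d+1} = \mathbf{A}^1$, and then $J_{d+1}\mathbf{A}^{d+1} = U L_1 \cdots L_d \mathbf{A}^{d+1} = U\mathbf{A}^1$, that is, $t\mathbf{A}^{d+1} = U\mathbf{A}^1$. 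Expanding these identities entry--wise against the bidiagonal shape of $L_j$ and $U$ yields exactly the two--term relations above, with $\gamma_{n+1}$ read off from the subdiagonal of the appropriate $L_j$ (or from the diagonal of $U$ in the wrap--around case). Theorem~\ref{[SEC2]-TH41-DMJAT82} then closes the argument. The main obstacle to anticipate is bookkeeping: matching, for each residue $r \bmod (d+1)$, the particular Darboux factor from which the scalar $\gamma_{n+1}$ is extracted, so that the values harvested from the $d+1$ intertwinings assemble into a single unified sequence of coefficients in (\ref{[Sec2]-HTRR-Symm}); this consistency is ultimately guaranteed by the uniqueness assumption on the factorization (\ref{[Sec4]-xdFactorization}).
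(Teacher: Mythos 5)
Your proposal is correct and follows essentially the same route as the paper: the cyclic factorization $J_{1}=L_{1}L_{2}\cdots L_{d}U$ yields the two--term links $\mathbf{\underline{A}}^{j}=L_{j}\,\mathbf{\underline{A}}^{j+1}$ and $x\mathbf{\underline{A}}^{d+1}=U\mathbf{\underline{A}}^{1}$, which are exactly the relations that assemble, via (\ref{[Sec4]-xdDecomp}), into the symmetric recurrence (\ref{[Sec2]-HTRR-Symm}). If anything, your write-up is more complete at the final stage, since it makes explicit two steps the paper only asserts, namely the direct verification of (\ref{[Sec2]-d-symmetric-Sn}) and the residue-by-residue reduction of (\ref{[Sec2]-HTRR-Symm}) to the two--term relations before invoking Theorem \ref{[SEC2]-TH41-DMJAT82}.
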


\begin{proof}
Let $\{A_{n}^{1}\}$ satify the $(d+2)$--term recurrence relation given by (%
\ref{[Sec2]-HTRR-MultOP})%
\begin{equation*}
xA_{n+d}^{1}(x)=A_{n+d+1}^{1}(x)+b_{n+d}^{[1]}A_{n+d}^{1}(x)+\sum_{\nu
=0}^{d-1}c_{n+d-\nu }^{d-1-\nu ,[1]}A_{n+d-1-\nu }^{1}(x)\,,
\end{equation*}%
$c_{n+1}^{0,[1]}\neq 0$ for $n\geq 0$, with the initial conditions $%
A_{0}^{1}(x)=1$, $A_{1}^{1}(x)=x-b_{0}^{[1]}$,~and%
\begin{equation*}
A_{n}^{1}(x)=(x-b_{n-1}^{[1]})A_{n-1}^{1}(x)-\sum_{\nu =0}^{n-2}c_{n-1-\nu
}^{d-1-\nu ,[1]}A_{n-2-\nu }^{1}(x)\,,\ \ 2\leq n\leq d\,,
\end{equation*}%
with known recurrence coefficients. Hence, in a matrix notation, we have%
\begin{equation}
x\mathbf{\underline{A}}^{1}=J_{1}\,\mathbf{\underline{A}}^{1},
\label{[Sec4]-J1A1}
\end{equation}%
where%
\begin{equation*}
J_{1}=%
\begin{bmatrix}
b_{d}^{[1]} & 1 &  &  &  &  &  \\ 
c_{d+1}^{d-1,[1]} & b_{d+1}^{[1]} & 1 &  &  &  &  \\ 
c_{d+1}^{d-2,[1]} & c_{d+2}^{d-1,[1]} & b_{d+2}^{[1]} & 1 &  &  &  \\ 
\vdots  & c_{d+2}^{d-2,[1]} & c_{d+3}^{d-1,[1]} & b_{d+3}^{[1]} & 1 &  &  \\ 
c_{d+1}^{0,[1]} & \cdots  & c_{d+3}^{d-2,[1]} & c_{d+4}^{d-1,[1]} & 
b_{d+4}^{[1]} & 1 &  \\ 
& \ddots  & \ddots  & \ddots  & \ddots  & \ddots  & \ddots 
\end{bmatrix}%
.
\end{equation*}%
Following \cite{BBF-JMAA-13} (see also \cite{Keller-94}), the unique $LU$
factorization for the square $(d+2)$--banded semi-infinite Hessenberg matrix 
$J_{1}$ is such that%
\begin{equation*}
U=%
\begin{bmatrix}
\gamma _{1} & 1 &  &  &  \\ 
& \gamma _{d+2} & 1 &  &  \\ 
&  & \gamma _{2d+3} & 1 &  \\ 
&  &  & \gamma _{3d+4} & \ddots  \\ 
&  &  &  & \ddots 
\end{bmatrix}%
\end{equation*}%
is an upper two-banded, semi-infinite, and invertible matrix, and $L$ is a $%
(d+1)$--lower triangular, semi--infinite, and invertible matrix with ones in
the main diagonal. It is clear that the entries in $L$ and $U$ depend
entirely on the known entries of $J_{1}$. Thus, we rewrite (\ref{[Sec4]-J1A1}%
) as%
\begin{equation}
x\mathbf{\underline{A}}^{1}=LU\,\mathbf{\underline{A}}^{1}.
\label{[Sec4]-LUA1}
\end{equation}%
Next, we define a\ new sequence of polynomials $\{A_{n}^{d+1}\}$ by $x%
\mathbf{\underline{A}}^{d+1}=U\mathbf{\underline{A}}^{1}$. Multiplying both
sides of (\ref{[Sec4]-LUA1}) by the matrix $U$, we have%
\begin{equation*}
x\left( U\,\mathbf{\underline{A}}^{1}\right) =UL\left( U\,\mathbf{\underline{%
A}}^{1}\right) =x(x\mathbf{\underline{A}}^{d+1})=UL(x\mathbf{\underline{A}}%
^{d+1}),
\end{equation*}%
and pulling out $x$ we get%
\begin{equation}
x\mathbf{\underline{A}}^{d+1}=UL\,\mathbf{\underline{A}}^{d+1},
\label{[Sec4]-UAd1}
\end{equation}%
which is the matrix form of the $(d+2)$--term recurrence relation satisfied
by the new type~II multiple polynomial sequence $\{A_{n}^{d+1}\}$.

Since $L$ is given by (see \cite{BBF-JMAA-13})%
\begin{equation*}
L=L_{1}L_{2}\cdots L_{d},
\end{equation*}%
where every $L_{j}$ is the lower two-banded, semi-infinite, and invertible
matrix%
\begin{equation*}
L_{j}=%
\begin{bmatrix}
1 &  &  &  &  \\ 
\gamma _{d-j} & 1 &  &  &  \\ 
& \gamma _{2d+1-j} & 1 &  &  \\ 
&  & \gamma _{3d+2-j} & 1 &  \\ 
&  &  & \ddots & \ddots%
\end{bmatrix}%
\end{equation*}%
with ones in the main diagonal, it is also clear that the entries in $L_{j}$%
\ will depend on the known entries in $J_{1}$. Under the same hypotheses, we
can define new $d-1$ polinomial sequences starting with $\mathbf{\underline{A%
}}^{1}$ as follows: $\mathbf{\underline{A}}^{2}=L_{1}^{-1}\mathbf{\underline{%
A}}^{1}$, $\mathbf{\underline{A}}^{3}=L_{2}^{-1}\mathbf{\underline{A}}^{2}$,
\ldots , $\mathbf{\underline{A}}^{d}=L_{d-1}^{-1}\mathbf{\underline{A}}%
^{d-1} $ up to $\mathbf{\underline{A}}^{d+1}=L_{d}^{-1}\mathbf{\underline{A}}%
^{d}$. That is, $\mathbf{\underline{A}}^{j+1}=L_{j}^{-1}\mathbf{\underline{A}%
}^{j}$, $j=1,\ldots d-1$. Combining this fact with (\ref{[Sec4]-UAd1}) we
deduce%
\begin{eqnarray*}
x\mathbf{\underline{A}}^{d} &=&L_{d}UL_{1}L_{2}\cdots L_{d-1}\mathbf{%
\underline{A}}^{d}, \\
x\mathbf{\underline{A}}^{d-1} &=&L_{d-1}L_{d}UL_{1}L_{2}\cdots L_{d-2}%
\mathbf{\underline{A}}^{d-1},
\end{eqnarray*}%
up to the known expression (\ref{[Sec4]-J1A1})%
\begin{equation*}
x\mathbf{\underline{A}}^{1}=L_{1}L_{2}\cdots L_{d}U\mathbf{\underline{A}}%
^{1}.
\end{equation*}%
The above expresions mean that all these $d+1$ sequences $\{A^{j}\}$, $%
j=1,\ldots ,d+1$ are in turn type~II multiple orthogonal polynomials.
Finally, from these $d+1$ sequences, we construct the type~II polynomials in
the sequence $\{S_{n}\}$ as (\ref{[Sec4]-xdDecomp}). Note that, according to
(\ref{[Sec2]-d-symmetric-Sn}), it directly follows that $\{S_{n}\}$ is a $d$%
--symmetric type~II multiple orthogonal polynomial sequence, which proves
our assertion.
\end{proof}

Next, we state the converse of the above theorem. That is, given a sequence
of type~II $d$--symmetric multiple orthogonal polynomials $\{S_{n}\}$
satisfying the high--term recurrence relation~(\ref{[Sec2]-HTRR-Symm}), we
find a set of $d+1$ polynomial families $\{A_{n}^{j}\}$, $j=1,\ldots ,d+1$
of not necessarily symmetric type~II multiple orthogonal polynomials,
satisfying in turn $(d+2)$--term recurrence relations, so o they are
themselves sequences of type~II multiple orthogonal polynomials. When $d=2$,
this construction goes back to the work of Douak and Maroni (see \cite%
{DM-A-92}).

\begin{teo}
\label{Th-Symm-Inverso} Let $\{S_{n}\}$ be a $d$--symmetric sequence of
type~II multiple orthogonal polynomials satisfiyng the corresponding
high--order recurrence relation (\ref{[Sec2]-HTRR-Symm}), and $\{A_{n}^{j}\}$%
, $j=1,\ldots ,d+1$, the sequences of polynomials given by (\ref%
{[Sec4]-xdDecomp}). Then, each sequence $\{A_{n}^{j}\}$, $j=1,\ldots ,d+1$,
satisfies the $(d+2)$--term recurrence relation%
\begin{equation*}
xA_{n+d}^{j}(x)=A_{n+d+1}^{j}(x)+b_{n+d}^{[j]}A_{n+d}^{j}(x)+\sum_{\nu
=0}^{d-1}c_{n+d-\nu }^{d-1-\nu ,[j]}A_{n+d-1-\nu }^{j}(x)\,,
\end{equation*}%
$c_{n+1}^{0,[j]}\neq 0$ for $n\geq 0$, with initial conditions $%
A_{0}^{j}(x)=1$, $A_{1}^{j}(x)=x-b_{0}^{[j]}$,%
\begin{equation*}
A_{n}^{j}(x)=(x-b_{n-1}^{[j]})A_{n-1}^{j}(x)-\sum_{\nu =0}^{n-2}c_{n-1-\nu
}^{d-1-\nu ,[j]}A_{n-2-\nu }^{j}(x)\,,\ \ 2\leq n\leq d\,,
\end{equation*}%
and therefore they are type~II multiple orthogonal polynomial sequences.
\end{teo}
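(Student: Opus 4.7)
The plan is to derive the $(d+2)$-term recurrence for each $\{A_n^j\}$ by iterating the high-order recurrence (\ref{[Sec2]-HTRR-Symm}) exactly $d+1$ times and then passing to the variable $y = x^{d+1}$. The role of $d$-symmetry is that multiplication by $x^{d+1}$ preserves the residue class of the index modulo $d+1$ on the sequence $\{S_n\}$, which is precisely what makes the decomposition (\ref{[Sec4]-xdDecomp}) compatible with the action of $x^{d+1}$.

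First, I would rewrite (\ref{[Sec2]-HTRR-Symm}) in the form $xS_m = S_{m+1} + \gamma_{m-d+1}S_{m-d}$ for $m\ge d$, the initial slots $0\le m<d$ giving the pure shift $xS_m=S_{m+1}$. Thus multiplication by $x$ acts on the index either as a $+1$-shift or as a $-d$-shift. Iterating $d+1$ times and collecting terms by output index, only $d+2$ distinct positions appear, all of the form $N+(d+1)(1-k)$ for $k=0,1,\ldots,d+1$, and one obtains
\begin{equation*}
x^{d+1}S_N \;=\; S_{N+(d+1)} + \alpha_1^{(N)} S_N + \alpha_2^{(N)} S_{N-(d+1)} + \cdots + \alpha_{d+1}^{(N)} S_{N-d(d+1)},
\end{equation*}
where each $\alpha_k^{(N)}$ is a sum over the $\binom{d+1}{k}$ interleavings of $+1$- and $-d$-steps with exactly $k$ of the latter, and in particular $\alpha_{d+1}^{(N)} = \prod_{i=1}^{d+1}\gamma_{N-id+1}$ is a non-zero product of $\gamma$'s.

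Next, I would fix $j\in\{1,\ldots,d+1\}$, set $N = (d+1)(n+d) + (j-1)$, and apply the decomposition (\ref{[Sec4]-xdDecomp}). Every $S$-term above has index in the residue class of $j-1$ modulo $d+1$, so each equals $x^{j-1}$ times some $A_m^j(x^{d+1})$. Cancelling the common factor $x^{j-1}$ and setting $y = x^{d+1}$, the identity becomes
\begin{equation*}
yA_{n+d}^j(y) \;=\; A_{n+d+1}^j(y) + \alpha_1^{(N)} A_{n+d}^j(y) + \alpha_2^{(N)} A_{n+d-1}^j(y) + \cdots + \alpha_{d+1}^{(N)} A_n^j(y),
\end{equation*}
which is exactly the required $(d+2)$-term recurrence with $b_{n+d}^{[j]} = \alpha_1^{(N)}$ and $c_{n+d-\nu}^{d-1-\nu,[j]} = \alpha_{\nu+2}^{(N)}$ for $\nu=0,\ldots,d-1$; the condition $c_{n+1}^{0,[j]} = \alpha_{d+1}^{(N)}\neq 0$ is automatic. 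The initial data $A_0^j(y)=1$, $A_1^j(y)=y-b_0^{[j]}$, and the truncated triangular identities for $2\le n\le d$ are read off by applying partial (fewer than $d+1$) iterates of the $S$-recurrence to the monomial block $S_k(x)=x^k$, $0\le k\le d$, and re-inserting into (\ref{[Sec4]-xdDecomp}). Once the recurrence is established, the classical Favard-type theorem for $d$-orthogonal polynomials (cf.\ \cite{DM-JAT-82,MMR-JDEA-11}) produces, for each $j$, a regular vector of $d$ linear functionals with respect to which $\{A_n^j\}$ is type~II multiple orthogonal.

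The main difficulty I anticipate is not conceptual but combinatorial: keeping the indices of the coefficients $\alpha_k^{(N)}$ consistent through $d+1$ iterations and matching them cleanly with the prescribed subscripts of $b^{[j]}$ and $c^{d-1-\nu,[j]}$ for every residue class $j$. The residue class $j-1$ influences only the shift of the starting point $N$ modulo $d+1$, so the structural shape of the recurrence is uniform in $j$, but the precise $\gamma$-indices entering each $\alpha_k^{(N)}$ do depend on $j$, and collecting them into the stated normal form requires careful bookkeeping.
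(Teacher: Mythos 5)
Your argument is correct, but it is organized differently from the paper's. The paper never iterates (\ref{[Sec2]-HTRR-Symm}) $d+1$ times: it shifts the index by $n\rightarrow (d+1)n-d+j$, substitutes (\ref{[Sec4]-xdDecomp}), and extracts the system (\ref{[Sec4]-9-Relaciones}) of $d+1$ \emph{two--term} relations linking consecutive families, $A_{n}^{j}=A_{n}^{j+1}+\gamma _{(d+1)n+(j-d)}A_{n-1}^{j+1}$ and $xA_{n}^{d+1}=A_{n+1}^{1}+\gamma _{(d+1)n+1}A_{n}^{1}$; written as two--banded matrices $L_{j}$ and $U$ and composed around the cycle, these give $x\mathbf{\underline{A}}^{j}=J_{j}\mathbf{\underline{A}}^{j}$ with $J_{j}$ a cyclic permutation of $L_{1}\cdots L_{d}U$, hence a $(d+2)$--banded Hessenberg matrix. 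Your expansion of $x^{d+1}S_{N}$ over interleavings of $+1$-- and $-d$--shifts computes precisely the entries of that matrix product, so the two proofs agree coefficient by coefficient. What the paper's route buys is the factorized $LU$/Darboux structure of each $J_{j}$ (the content of Theorem \ref{Th-Symm-Directo} and of \cite{BBF-JMAA-13}) and an automatic handling, via matrix multiplication, of the index bookkeeping and truncated initial identities that you defer; what your route buys is directness, at the price of verifying that every interleaving survives (all intermediate indices must stay $\geq d$ for the $\gamma$--branch to exist, which does hold for $N=(d+1)(n+d)+j-1$ with $n\geq 0$) and that $c_{n+1}^{0,[j]}=\prod_{i=1}^{d+1}\gamma _{N-id+1}\neq 0$, which requires invoking the regularity hypothesis $\gamma _{m}\neq 0$. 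Both proofs conclude multiple orthogonality in the same way, via the converse characterization of $d$--orthogonality through the $(d+2)$--term recurrence (\ref{[Sec2]-HTRR-MultOP}) from \cite{DM-JAT-82}.
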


\begin{proof}
Since $\{S_{n}\}$ is a $d$--symmetric multiple orthogonal sequence, it
satisfies (\ref{[Sec2]-HTRR-Symm}) with $S_{n}(x)=x^{n}$ for $0\leq n\leq d$%
. Shifting, for convenience, the multi--index in (\ref{[Sec2]-HTRR-Symm}) as 
$n\rightarrow (d+1)n-d+j,\quad j=0,1,2,....d$, we obtain the equivalent
system of $(d+1)$ equations%
\begin{equation*}
\left\{ 
\begin{array}{cc}
xS_{(d+1)n}(x)=S_{(d+1)n+1}(x)+\gamma _{(d+1)n-d+1}S_{(d+1)n-d}(x)\,, & j=0,
\\ 
xS_{(d+1)n+1}(x)=S_{(d+1)n+2}(x)+\gamma _{(d+1)n-d+2}S_{(d+1)n-d+1}(x)\,, & 
j=1, \\ 
\vdots & \vdots \\ 
xS_{(d+1)n+d-1}(x)=S_{(d+1)n+d}(x)+\gamma _{(d+1)n}S_{(d+1)n-1}(x)\,, & 
j=d-1, \\ 
xS_{(d+1)n+d}(x)=S_{(d+1)n+(d+1)}(x)+\gamma _{(d+1)n+1}S_{(d+1)n}(x)\,, & 
j=d.%
\end{array}%
\right. \,.
\end{equation*}%
Substituting (\ref{[Sec4]-xdDecomp}) into the above expressions, and
replacing $x^{d+1}\rightarrow x$, we get the following system of $(d+1)$
equations%
\begin{equation}
\left\{ 
\begin{array}{cc}
1) & A_{n}^{1}(x)=A_{n}^{2}(x)+\gamma _{(d+1)n-(d-1)}A_{n-1}^{2}(x)\,, \\ 
2) & A_{n}^{2}(x)=A_{n}^{3}(x)+\gamma _{(d+1)n-(d-2)}A_{n-1}^{3}(x)\,, \\ 
\vdots & \vdots \\ 
d) & A_{n}^{d}(x)=A_{n}^{d+1}(x)+\gamma _{(d+1)n}A_{n-1}^{d+1}(x), \\ 
d+1) & xA_{n}^{d+1}(x)=A_{n+1}^{1}(x)+\gamma _{(d+1)n+1}A_{n}^{1}(x)\,.%
\end{array}%
\right.  \label{[Sec4]-9-Relaciones}
\end{equation}%
Notice that having $x=0$, we define the $\gamma $ as%
\begin{equation*}
\gamma _{(d+1)n+1}=\frac{-A_{n+1}^{1}(0)}{A_{n}^{1}(0)}.
\end{equation*}%
In the remainder of this section we deal with the matrix representation of
each equation in (\ref{[Sec4]-9-Relaciones}). Notice that the first $d$
equations of (\ref{[Sec4]-9-Relaciones}), namely $A_{n}^{j}=A_{n}^{j+1}+%
\gamma _{(d+1)n+(j-d)}A_{n-1}^{j+1}$, \ $j=1,\ldots ,d$, can be written in
the matrix way%
\begin{equation}
\mathbf{\underline{A}}^{j}=L_{j}\,\mathbf{\underline{A}}^{j+1},
\label{[Sec4]-LowerLj}
\end{equation}%
where $L_{j}$ is the lower two-banded, semi-infinite, and invertible matrix%
\begin{equation*}
L_{j}=%
\begin{bmatrix}
1 &  &  &  &  \\ 
\gamma _{d-j} & 1 &  &  &  \\ 
& \gamma _{2d+1-j} & 1 &  &  \\ 
&  & \gamma _{3d+2-j} & 1 &  \\ 
&  &  & \ddots & \ddots%
\end{bmatrix}%
,
\end{equation*}%
and%
\begin{equation*}
\mathbf{\underline{A}}^{j}=%
\begin{bmatrix}
A_{0}^{j}(x) & A_{1}^{j}(x) & A_{2}^{j}(x) & \cdots%
\end{bmatrix}%
^{T}.
\end{equation*}%
Similarly, the $d+1)$--th equation in (\ref{[Sec4]-9-Relaciones}) can be
expressed as%
\begin{equation}
x\mathbf{\underline{A}}^{d+1}=U\,\mathbf{\underline{A}}^{1},
\label{[Sec4]-UpperU}
\end{equation}%
where $U$ is the upper two-banded, semi-infinite, and invertible matrix%
\begin{equation*}
U=%
\begin{bmatrix}
\gamma _{1} & 1 &  &  &  \\ 
& \gamma _{d+2} & 1 &  &  \\ 
&  & \gamma _{2d+3} & 1 &  \\ 
&  &  & \gamma _{3d+4} & \ddots \\ 
&  &  &  & \ddots%
\end{bmatrix}%
.
\end{equation*}%
It is clear that the entries in the above matrices $L_{j}$ and $U$ are given
in terms of the recurrence coefficients $\gamma _{n+1}$ for $\{S_{n}\}$
given (\ref{[Sec2]-HTRR-Symm}). From (\ref{[Sec4]-LowerLj}) we have $\mathbf{%
\underline{A}}^{1}$ defined in terms of $\mathbf{\underline{A}}^{2}$, as $%
\mathbf{\underline{A}}^{1}=L_{1}\mathbf{\underline{A}}^{2}$. Likewise $%
\mathbf{\underline{A}}^{2}$ in terms of $\mathbf{\underline{A}}^{3}$ as $%
\mathbf{\underline{A}}^{2}=L_{2}\mathbf{\underline{A}}^{3}$, and so on up to 
$j=d$. Thus, it is easy to see that,%
\begin{equation*}
\mathbf{\underline{A}}^{1}=L_{1}L_{2}\cdots L_{d}\mathbf{\underline{A}}%
^{d+1}.
\end{equation*}%
Next, we multiply by $x$ both sides of the above expression, and we apply (%
\ref{[Sec4]-UpperU}) to obtain%
\begin{equation*}
x\mathbf{\underline{A}}^{1}=L_{1}L_{2}\cdots L_{d}U\,\mathbf{\underline{A}}%
^{1}.
\end{equation*}%
Since each $L_{j}$ and $U$ are lower and upper two-banded semi-infinite
matrices, it follows easily that $L_{1}L_{2}\cdots L_{d}$ is a lower
triangular $(d+1)$--banded matrix with ones in the main diagonal, so the
above decomposition is indeed a $LU$ decomposition of certain $(d+2)$%
--banded Hessenberg matrix $J_{1}=L_{1}L_{2}\cdots L_{d}U$\ (see for
instance \cite{BBF-JMAA-13} and \cite[Sec. 3.2 and 3.3]{Keller-94}). The
values of the entries of $J_{1}$ come from the usual definition for matrix
multiplication, matching every entry in $J_{1}=L_{1}L_{2}\cdots L_{d}U$,
with $L_{j}\,U$ given in (\ref{[Sec4]-LowerLj}) and (\ref{[Sec4]-UpperU})
respectively.

On the other hand, starting with $\mathbf{\underline{A}}^{2}=L_{2}\mathbf{%
\underline{A}}^{3}$ instead of $\mathbf{\underline{A}}^{1}$, and proceeding
in the same fashion as above, we can reach%
\begin{eqnarray*}
x\mathbf{\underline{A}}^{2} &=&L_{2}\cdots L_{d}UL_{1}\,\mathbf{\underline{A}%
}^{2}, \\
x\mathbf{\underline{A}}^{3} &=&L_{3}\cdots L_{d}UL_{1}L_{2}\,\mathbf{%
\underline{A}}^{3}, \\
&&\vdots \\
x\mathbf{\underline{A}}^{d+1} &=&UL_{1}L_{2}\cdots L_{d}\,\mathbf{\underline{%
A}}^{d+1}.
\end{eqnarray*}%
Observe that $J_{j}$ denotes a particular circular permutation of the matrix
product $L_{1}L_{2}\cdots L_{d}U$ . Thus, we have $J_{1}=L_{1}L_{2}\cdots
L_{d}U\,$, $J_{2}=L_{2}\cdots L_{d}UL_{1}\,$, \ldots , $J_{d+1}=UL_{1}L_{2}%
\cdots L_{d}\,$. Using this notation, $J_{j}$ is the matrix representation
of the operator of multiplication by $x$ in%
\begin{equation}
x\mathbf{\underline{A}}^{j}=J_{j}\mathbf{\underline{A}}^{j},
\label{[Sec4]-MatrixJj}
\end{equation}%
which from (\ref{[Sec2]-HTRR-MultOP}) directly implies that each polynomial
sequence $\{A^{j}\}$, $j=1,\ldots d+1$, satisfies a $(d+2)$--term recurrence
relation as in the statement of the theorem, with coefficients given in
terms of the recurrence coefficients $\gamma _{n+1}\,$, from the high--term
recurrence relation (\ref{[Sec2]-HTRR-Symm}) satisfied by the symmetric
sequence $\{S_{n}\}$.

This completes the proof.
\end{proof}


\section{Matrix multiple orthogonality}

\label{[Section-5]-FavardTh}


For an arbitrary system of type~II vector multiple polynomials $\{P_{n}\}$
orthogonal with respect to certain vector of functionals $\mathcal{U}=%
\begin{bmatrix}
u^{1} & u^{2}%
\end{bmatrix}%
^{T}$, with%
\begin{equation}
\mathcal{P}_{n}=%
\begin{bmatrix}
P_{3n}(x) & P_{3n+1}(x) & P_{3n+2}(x)%
\end{bmatrix}%
^{T},  \label{[Sec5]-P3n}
\end{equation}%
there exists a matrix decomposition%
\begin{equation}
\mathcal{P}_{n}=W_{n}(x^{3})\mathcal{X}_{n}\rightarrow 
\begin{bmatrix}
P_{3n}(x) \\ 
P_{3n+1}(x) \\ 
P_{3n+2}(x)%
\end{bmatrix}%
=W_{n}(x^{3})%
\begin{bmatrix}
1 \\ 
x \\ 
x^{2}%
\end{bmatrix}%
,  \label{[Sec5]-PnWnX}
\end{equation}%
with $W_{n}$ being the matrix polynomial (see \cite{MM-MJM13})%
\begin{equation}
W_{n}(x)=%
\begin{bmatrix}
A_{n}^{1}(x) & A_{n-1}^{2}(x) & A_{n-1}^{3}(x) \\ 
B_{n}^{1}(x) & B_{n}^{2}(x) & B_{n-1}^{3}(x) \\ 
C_{n}^{1}(x) & C_{n}^{2}(x) & C_{n}^{3}(x)%
\end{bmatrix}%
.  \label{[Sec5]-MatrixWn}
\end{equation}%
Throughout this Section, for simplicity of computations, we assume $d=2$ for
the vector of functionals $\mathcal{U}$, but the same results can be easily
extended for an arbitrary number of functionals. We first show that, if a
sequence of type~II multiple $2$--orthogonal polynomials $\{P_{n}\}$ satisfy
a recurrence relation like (\ref{[Sec2]-4TRR-P}), then there exists a
sequence of matrix polynomials $\{W_{n}\}$, $W_{n}(x)\in \mathbb{P}^{3\times
3}$ associated to $\{P_{n}\}$ by (\ref{[Sec5]-P3n}) and (\ref{[Sec5]-PnWnX}%
), satisfying a matrix four term recurrence relation with matrix
coefficients.

\begin{teo}
\label{[SEC5]-TH1}Let $\{P_{n}\}$ be a sequence of type~II multiple
polynomials, $2$--orthogonal with respect to the system of functionals $%
\{u^{1},u^{2}\}$, i.e., satisfying the four--term type recurrence relation (%
\ref{[Sec2]-4TRR-P}). Let $\{W_{n}\}$, $W_{n}(x)\in \mathbb{P}^{3\times 3}$
associated to $\{P_{n}\}$ by (\ref{[Sec5]-P3n}) and (\ref{[Sec5]-PnWnX}).
Then, the matrix polynomials $W_{n}$ satisfy a matrix four term recurrence
relation with matrix coefficients.
\end{teo}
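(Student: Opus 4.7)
The plan is to lift the scalar four--term recurrence (\ref{[Sec2]-4TRR-P}) satisfied by $\{P_n\}$ first to a three--term matrix recurrence for the vector polynomials $\mathcal{P}_n$, and then to iterate it once in order to extract a genuine four--term matrix recurrence for $\{W_n\}$.

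First, I will write (\ref{[Sec2]-4TRR-P}) separately at the indices $3n$, $3n+1$, $3n+2$ and stack the three identities into a column. Only $xP_{3n+2}$ reaches past the block $\{3n,3n+1,3n+2\}$ (producing $P_{3n+3}$, the top entry of $\mathcal{P}_{n+1}$), while only $xP_{3n}$ and $xP_{3n+1}$ reach into $\mathcal{P}_{n-1}$ via $P_{3n-1},P_{3n-2}$, so the stacking yields a three--term matrix recurrence
\[
x\,\mathcal{P}_n \;=\; \mathbf{A}\,\mathcal{P}_{n+1}+\mathbf{B}_n\,\mathcal{P}_n+\mathbf{C}_n\,\mathcal{P}_{n-1},
\]
where $\mathbf{A}$ is the $3\times 3$ matrix whose only nonzero entry is a $1$ in position $(3,1)$, and $\mathbf{B}_n,\mathbf{C}_n$ are explicit matrices built from the $\beta_k,\gamma_k^{1},\gamma_k^{0}$. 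The crucial structural fact to record at this step is the nilpotency $\mathbf{A}^{2}=0$.

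Second, I will iterate this three--term relation once to compute $x^{2}\mathcal{P}_n$. Distributing $x$ and reapplying the recurrence to each of $x\mathcal{P}_{n+1}, x\mathcal{P}_n, x\mathcal{P}_{n-1}$ produces \emph{a priori} five blocks ranging from $\mathcal{P}_{n-2}$ to $\mathcal{P}_{n+2}$; but the coefficient of $\mathcal{P}_{n+2}$ is precisely $\mathbf{A}^{2}=0$, so the expansion collapses to
\[
x^{2}\,\mathcal{P}_n \;=\; \mathbf{D}_n^{1}\,\mathcal{P}_{n+1}+\mathbf{D}_n^{0}\,\mathcal{P}_n+\mathbf{D}_n^{-1}\,\mathcal{P}_{n-1}+\mathbf{D}_n^{-2}\,\mathcal{P}_{n-2},
\]
with $\mathbf{D}_n^{1}=\mathbf{A}\mathbf{B}_{n+1}+\mathbf{B}_n\mathbf{A}$, $\mathbf{D}_n^{0}=\mathbf{A}\mathbf{C}_{n+1}+\mathbf{B}_n^{\,2}+\mathbf{C}_n\mathbf{A}$, $\mathbf{D}_n^{-1}=\mathbf{B}_n\mathbf{C}_n+\mathbf{C}_n\mathbf{B}_{n-1}$, and $\mathbf{D}_n^{-2}=\mathbf{C}_n\mathbf{C}_{n-1}$. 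This is already a genuine four--term relation in the four blocks $\mathcal{P}_{n+1},\mathcal{P}_n,\mathcal{P}_{n-1},\mathcal{P}_{n-2}$.

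Third, I will transcribe this identity into a recurrence for $W_n$. Setting $z=x^{3}$, the companion--type identity $x\mathcal{X}_0=M(z)\mathcal{X}_0$ holds, where $M(z)$ is the $3\times 3$ matrix with only nonzero entries $M_{12}=M_{23}=1$ and $M_{31}=z$ (and hence $M(z)^{3}=zI_{3}$); so $x^{2}\mathcal{X}_0=M(z)^{2}\mathcal{X}_0$. Plugging $\mathcal{P}_k=W_k(z)\mathcal{X}_0$ into the four--term relation yields
\[
W_n(z)\,M(z)^{2}\,\mathcal{X}_0 \;=\; \bigl[\mathbf{D}_n^{1}W_{n+1}(z)+\mathbf{D}_n^{0}W_n(z)+\mathbf{D}_n^{-1}W_{n-1}(z)+\mathbf{D}_n^{-2}W_{n-2}(z)\bigr]\mathcal{X}_0.
\]
To finish, I will use that $\{1,x,x^{2}\}$ is a free basis of $\mathbb{C}[x]$ over $\mathbb{C}[x^{3}]$ in order to strip $\mathcal{X}_0$ row by row and conclude equality of the $3\times 3$ matrix polynomials themselves, which is precisely the claimed matrix four--term recurrence with matrix coefficients.

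The main obstacle is twofold: the bookkeeping in step~2 must genuinely collapse the five \emph{a priori} blocks into four, and the descent in step~4 must be rigorously justified. Both hinge on structural facts already exploited in the paper: the nilpotency of $\mathbf{A}$ coming from the particular shape of the shift matrix in the three--term recurrence (compare (\ref{[Sec2]-3TTR-MCoefs})), and the uniqueness of the decomposition (\ref{[Sec2]-MatForm-II})--(\ref{[Sec2]-MatForm-II-Vn}), which makes the strip of $\mathcal{X}_0$ equivalent to an equality of $3\times 3$ matrix polynomials in $z$ on both sides.
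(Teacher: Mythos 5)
Your first two steps coincide with the paper's: the stacking into the block three--term recurrence $x\mathcal{P}_n=A\mathcal{P}_{n+1}+B_n\mathcal{P}_n+C_n\mathcal{P}_{n-1}$ and the first iteration giving $x^{2}\mathcal{P}_n$ as a combination of $\mathcal{P}_{n+1},\dots,\mathcal{P}_{n-2}$ (your $\mathbf{D}_n^{1},\dots,\mathbf{D}_n^{-2}$ are exactly the paper's $A_n^{\langle 1\rangle},\dots,D_n^{\langle 1\rangle}$). The gap is that you stop at $x^{2}$ and this does not yield the recurrence the theorem asserts. Since $W_n$ is a polynomial in $z=x^{3}$, the only power of $x$ that acts as a \emph{scalar} on $\mathcal{X}_0$ is $x^{3}$: indeed $x^{2}\mathcal{X}_0=M(z)^{2}\mathcal{X}_0$ with $M(z)^{2}$ a genuinely non--scalar matrix, so after stripping $\mathcal{X}_0$ your identity reads $W_n(z)M(z)^{2}=\mathbf{D}_n^{1}W_{n+1}(z)+\cdots$, i.e.\ $W_n$ is multiplied \emph{on the right} by the matrix $M(z)^{2}$ rather than by the variable. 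That is a correct identity, but it is not the four--term recurrence $xW_n(x)=A_n^{\langle 2\rangle}W_{n+1}(x)+B_n^{\langle 2\rangle}W_n(x)+C_n^{\langle 2\rangle}W_{n-1}(x)+D_n^{\langle 2\rangle}W_{n-2}(x)$ of (\ref{[Sec5]-Matrix4TRR-2}), which is what Theorem \ref{[SEC5]-TH2} subsequently feeds into the Favard argument; and multiplying your identity on the right by $M(z)$ to exploit $M(z)^{3}=zI_3$ ruins the form of the right--hand side, since the $M(z)$ lands to the right of each $W_k(z)$ instead of being absorbed into the left coefficients.

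The missing step is a \emph{second} iteration: compute $x^{3}\mathcal{P}_n$ from the $x^{2}$--relation. A priori this produces the six blocks $\mathcal{P}_{n+2},\dots,\mathcal{P}_{n-3}$, and the collapse back to four requires two further vanishings beyond $A^{2}=0$, namely $A_n^{\langle 1\rangle}A=0$ and $D_n^{\langle 1\rangle}C_{n-2}=0$. These are not consequences of the nilpotency of $A$ alone: the first holds because the third column of $A_n^{\langle 1\rangle}=AB_{n+1}+B_nA$ vanishes, and the second because $D_n^{\langle 1\rangle}C_{n-2}$ is a product of three strictly upper triangular $3\times 3$ matrices. Once you have $x^{3}\mathcal{P}_n=A_n^{\langle 2\rangle}\mathcal{P}_{n+1}+\cdots+D_n^{\langle 2\rangle}\mathcal{P}_{n-2}$, the substitution $\mathcal{P}_k=W_k(x^{3})\mathcal{X}_0$ puts the scalar $x^{3}W_n(x^{3})$ on the left, your stripping argument (which is fine, and is the one point where you are more careful than the paper) applies, and the shift $x^{3}\to x$ gives the stated recurrence.
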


\begin{proof}
We first prove that the sequence of vector polynomials $\{W_{n}\}$ satisfy a
four term recurrence relation with matrix coefficients, starting from the
fact that $\{P_{n}\}$ satisfy (\ref{[Sec2]-4TRR-P}). In order to get this
result, we use the matrix interpretation of multiple orthogonality described
in Section \ref{[Section-2]-Defs}. We know that the sequence of type~II
multiple $2$--orthogonal polynomials $\{P_{n}\}$ satisfy the four term
recurrence relation (\ref{[Sec2]-4TRR-P}). From (\ref{[Sec5]-P3n}), using
the matrix interpretation for multiple orthogonality, the above expression
can be seen as the matrix three term recurrence relation%
\begin{equation*}
x\mathcal{P}_{n}=A\mathcal{P}_{n+1}+B_{n}\mathcal{P}_{n}+C_{n}\mathcal{P}%
_{n-1}
\end{equation*}%
or, equivalently%
\begin{equation*}
x%
\begin{bmatrix}
P_{3n} \\ 
P_{3n+1} \\ 
P_{3n+2}%
\end{bmatrix}%
=%
\begin{bmatrix}
0 & 0 & 0 \\ 
0 & 0 & 0 \\ 
1 & 0 & 0%
\end{bmatrix}%
\begin{bmatrix}
P_{3n+3} \\ 
P_{3n+4} \\ 
P_{3n+5}%
\end{bmatrix}%
\end{equation*}%
\begin{equation*}
+%
\begin{bmatrix}
b_{3n} & 1 & 0 \\ 
c_{3n+1} & b_{3n+1} & 1 \\ 
d_{3n+2} & c_{3n+2} & b_{3n+2}%
\end{bmatrix}%
\begin{bmatrix}
P_{3n} \\ 
P_{3n+1} \\ 
P_{3n+2}%
\end{bmatrix}%
+%
\begin{bmatrix}
0 & d_{3n} & c_{3n} \\ 
0 & 0 & d_{3n+1} \\ 
0 & 0 & 0%
\end{bmatrix}%
\begin{bmatrix}
P_{3n-3} \\ 
P_{3n-2} \\ 
P_{3n-1}%
\end{bmatrix}%
\end{equation*}%
Multiplying the above expression by $x$ we get%
\begin{eqnarray}
x^{2}\mathcal{P}_{n} &=&Ax\mathcal{P}_{n+1}+B_{n}x\mathcal{P}_{n}+C_{n}x%
\mathcal{P}_{n-1}  \notag \\
&=&A\left[ x\mathcal{P}_{n+1}\right] +B_{n}\left[ x\mathcal{P}_{n}\right]
+C_{n}\left[ x\mathcal{P}_{n-1}\right]  \notag \\
&=&AA\mathcal{P}_{n+2}+\left[ AB_{n+1}+B_{n}A\right] \mathcal{P}_{n+1}
\label{[Sec5]-PrimABC} \\
&&+\left[ AC_{n+1}+B_{n}B_{n}+C_{n}A\right] \mathcal{P}_{n}  \notag \\
&&+\left[ B_{n}C_{n}+C_{n}B_{n-1}\right] \mathcal{P}_{n-1}+C_{n}C_{n-1}%
\mathcal{P}_{n-2}  \notag
\end{eqnarray}%
The matrix $A$ is nilpotent, so $AA$ is the zero matrix of size $3\times 3$.
Having%
\begin{equation*}
\begin{array}{ll}
A_{n}^{\langle 1\rangle }=AB_{n+1}+B_{n}A, & B_{n}^{\langle 1\rangle
}=AC_{n+1}+B_{n}B_{n}+C_{n}A, \\ 
C_{n}^{\langle 1\rangle }=B_{n}C_{n}+C_{n}B_{n-1}, & D_{n}^{\langle 1\rangle
}=C_{n}C_{n-1},%
\end{array}%
\end{equation*}%
where the entries of $A_{n}^{\langle 1\rangle }$, $B_{n}^{\langle 1\rangle }$%
, and $C_{n}^{\langle 1\rangle }$ can be easily obtained using a
computational software as Mathematica$^{\circledR }$ or Maple$^{\circledR }$%
, from the entries of $A_{n}$, $B_{n}$, and $C_{n}$. Thus, we can rewrite (%
\ref{[Sec5]-PrimABC}) as%
\begin{equation*}
x^{2}\mathcal{P}_{n}=A_{n}^{\langle 1\rangle }\mathcal{P}_{n+1}+B_{n}^{%
\langle 1\rangle }\mathcal{P}_{n}+C_{n}^{\langle 1\rangle }\mathcal{P}%
_{n-1}+D_{n}^{\langle 1\rangle }\mathcal{P}_{n-2}.
\end{equation*}%
We now continue in this fashion,multiplying again by $x$%
\begin{eqnarray*}
x^{3}\mathcal{P}_{n} &=&A_{n}^{\langle 1\rangle }x\mathcal{P}%
_{n+1}+B_{n}^{\langle 1\rangle }x\mathcal{P}_{n}+C_{n}^{\langle 1\rangle }x%
\mathcal{P}_{n-1}+D_{n}^{\langle 1\rangle }x\mathcal{P}_{n-2} \\
&=&A_{n}^{\langle 1\rangle }A\mathcal{P}_{n+2}+\left[ A_{n}^{\langle
1\rangle }B_{n+1}+B_{n}^{\langle 1\rangle }A\right] \mathcal{P}_{n+1} \\
&&+\left[ A_{n}^{\langle 1\rangle }C_{n+1}+B_{n}^{\langle 1\rangle
}B_{n}+C_{n}^{\langle 1\rangle }A\right] \mathcal{P}_{n} \\
&&+\left[ B_{n}^{\langle 1\rangle }C_{n}+C_{n}^{\langle 1\rangle
}B_{n-1}+D_{n}^{\langle 1\rangle }A\right] \mathcal{P}_{n-1} \\
&&+\left[ C_{n}^{\langle 1\rangle }C_{n-1}+D_{n}^{\langle 1\rangle }B_{n-2}%
\right] \mathcal{P}_{n-2}+D_{n}^{\langle 1\rangle }C_{n-2}\mathcal{P}_{n-3}
\end{eqnarray*}%
The matrix products $A_{n}^{\langle 1\rangle }A$ and $D_{n}^{\langle
1\rangle }C_{n-2}$ both give the zero matrix of size $3\times 3$, and the
remaining matrix coefficients are%
\begin{equation}
\begin{array}{ll}
A_{n}^{\langle 2\rangle }=A_{n}^{\langle 1\rangle }B_{n+1}+B_{n}^{\langle
1\rangle }A, & B_{n}^{\langle 2\rangle }=A_{n}^{\langle 1\rangle
}C_{n+1}+B_{n}^{\langle 1\rangle }B_{n}+C_{n}^{\langle 1\rangle }A, \\ 
C_{n}^{\langle 2\rangle }=B_{n}^{\langle 1\rangle }C_{n}+C_{n}^{\langle
1\rangle }B_{n-1}+D_{n}^{\langle 1\rangle }A, & D_{n}^{\langle 2\rangle
}=C_{n}^{\langle 1\rangle }C_{n-1}+D_{n}^{\langle 1\rangle }B_{n-2}.%
\end{array}
\label{[Sec5]-Matrix4TRR-coef}
\end{equation}%
Using the expressions stated above, the matrix coefficients (\ref%
{[Sec5]-Matrix4TRR-coef}) can be easily obtained as well. Beyond the
explicit expression of their respective entries, the key point is that they
are structured matrices, namely $A_{n}^{\langle 2\rangle }$ is
lower-triangular with one's in the main diagonal, $D_{n}^{\langle 2\rangle }$
is upper-triangular, and $B_{n}^{\langle 2\rangle }$, $C_{n}^{\langle
2\rangle }$ are full matrices. Therefore, the sequence of type~II vector
multiple orthogonal polynomials satisfy the following matrix four term
recurrence relation%
\begin{equation}
x\mathcal{P}_{n}=A_{n}^{\langle 2\rangle }\mathcal{P}_{n+1}+B_{n}^{\langle
2\rangle }\mathcal{P}_{n}+C_{n}^{\langle 2\rangle }\mathcal{P}%
_{n-1}+D_{n}^{\langle 2\rangle }\mathcal{P}_{n-2},\ \ n=2,3,\ldots \,,
\label{[Sec5]-Matrix4TRR-3}
\end{equation}%
Next,\ combining (\ref{[Sec5]-PnWnX}) with (\ref{[Sec5]-Matrix4TRR-3}), we
can assert that%
\begin{multline*}
x^{3}W_{n}(x^{3})%
\begin{bmatrix}
1 \\ 
x \\ 
x^{2}%
\end{bmatrix}%
=A_{n}^{\langle 2\rangle }W_{n+1}(x^{3})%
\begin{bmatrix}
1 \\ 
x \\ 
x^{2}%
\end{bmatrix}%
+B_{n}^{\langle 2\rangle }W_{n}(x^{3})%
\begin{bmatrix}
1 \\ 
x \\ 
x^{2}%
\end{bmatrix}
\\
+C_{n}^{\langle 2\rangle }W_{n-1}(x^{3})%
\begin{bmatrix}
1 \\ 
x \\ 
x^{2}%
\end{bmatrix}%
+D_{n}^{\langle 2\rangle }W_{n-2}(x^{3})%
\begin{bmatrix}
1 \\ 
x \\ 
x^{2}%
\end{bmatrix}%
,\ \ n=2,3,\ldots .
\end{multline*}%
The vector $%
\begin{bmatrix}
1 & x & x^{2}%
\end{bmatrix}%
^{T}$ can be removed, and after the shift $x^{3}\rightarrow x$ the above
expression may be simplified as%
\begin{equation}
xW_{n}(x)=A_{n}^{\langle 2\rangle }W_{n+1}(x)+B_{n}^{\langle 2\rangle
}W_{n}(x)+C_{n}^{\langle 2\rangle }W_{n-1}(x)+D_{n}^{\langle 2\rangle
}W_{n-2}(x),  \label{[Sec5]-Matrix4TRR-2}
\end{equation}%
where $W_{-1}=0_{3\times 3}$ and $W_{0}$ is certain constant matrix, for
every $n=1,2,\ldots $, which is the desired matrix four term recurrence
relation for $W_{n}(x)$.
\end{proof}

This kind of matrix high--term recurrence relation completely characterizes
certain type of orthogonality. Hence, we are going prove a Favard type
Theorem which states that, under the assumptions of Theorem \ref{[SEC5]-TH1}%
, the matrix polynomials $\{W_{n}\}$ are \textit{type~II matrix multiple
orthogonal} with respect to a system of two matrices of measures $\{d\mathbf{%
M}^{1},d\mathbf{M}^{2}\}$.

Next, we briefly review some of the standard facts on the theory of \textit{%
matrix orthogonality}, or \textit{orthogonality with respect to a matrix of
measures} (see \cite{D-CJM-95}, \cite{DG-JCAM-05} and the references
therein). Let $W,V\in \mathbb{P}^{3\times 3}$ be two matrix polynomials, and
let%
\begin{equation*}
\mathbf{M}(x)=%
\begin{bmatrix}
\mu _{11}(x) & \mu _{12}(x) & \mu _{13}(x) \\ 
\mu _{21}(x) & \mu _{22}(x) & \mu _{23}(x) \\ 
\mu _{31}(x) & \mu _{32}(x) & \mu _{33}(x)%
\end{bmatrix}%
,
\end{equation*}%
be a matrix with positive Borel measures $\mu _{i,j}(x)$. Let $\mathbf{M}(E)$
be positive definite for any Borel set $E\subset \mathbb{R}$, having finite
moments%
\begin{equation*}
\bar{\mu}_{k}=\int_{E}d\mathbf{M}(x)x^{k},\,\,\,k=0,1,2,\ldots
\end{equation*}%
of every order, and satisfying that%
\begin{equation*}
\int_{E}V(x)d\mathbf{M}(x)V^{\ast }(x),
\end{equation*}%
where $V^{\ast }\in \mathbb{P}^{3\times 3}$ is the adjoint matrix of $V\in 
\mathbb{P}^{3\times 3}$, is non-singular if the matrix leading coefficient
of the matrix polynomial $V$ is non-singular. Under these conditions, it is
possible to associate to a weight matrix $\mathbf{M}$, the Hermitian
sesquilinear form%
\begin{equation*}
\langle W,V\rangle =\int_{E}W(x)d\mathbf{M}(x)V^{\ast }(x).
\end{equation*}%
We then say that a sequence of matrix polynomials $\{W_{n}\}$, $W_{n}\in 
\mathbb{P}^{3\times 3}$ with degree $n$ and nonsingular leading coefficient,
is orthogonal with respect to $\mathbf{M}$\ if%
\begin{equation}
\langle W_{m},W_{n}\rangle =\Delta _{n}\delta _{m,n}\,,
\label{[Sec5]-Matrix-Orthog}
\end{equation}%
where $\Delta _{n}\in \mathcal{M}_{3\times 3}$ is a positive definite upper
triangular matrix, for $n\geq 0$.

We can define the \textit{matrix moment functional}$\ \mathrm{M}$ acting in $%
\mathbb{P}^{3\times 3}$ over $\mathcal{M}_{3\times 3}$,\ in terms of the
above matrix inner product, by $\mathrm{M}(WV)=\langle W,V\rangle $. This
construction is due to J\'odar \textit{et al.} (see \cite{JD-JD-97}, \cite%
{JDP-JAT-96}) where the authors extend to the matrix framework the linear
moment functional approach developed by Chihara in \cite{Chi78}. Hence, the
moments of $\mathbf{M}(x)$ and (\ref{[Sec5]-Matrix-Orthog}) can be written%
\begin{equation*}
\begin{array}{l}
\mathrm{M}(x^{k})=\bar{\mu}_{k}\,,\,\,\,k=0,1,2,\ldots , \\ 
\mathrm{M}(W_{m}V_{n})=\Delta _{n}\delta _{m,n}\,,\,\,\,m,n=0,1,2,\ldots .%
\end{array}%
\end{equation*}

Let $\boldsymbol{m}=(m_{1},m_{2})\in \mathbb{N}^{2}$ be a multi--index with
length $|\mathbf{m}|:=m_{1}+\cdots +m_{2}$ and let $\{\mathrm{M}^{1},\mathrm{%
M}^{2}\}$ be a set of matrix moment functionals as defined above. Let $\{W_{%
\mathbf{m}}\}$ be a sequence of matrix polynomials, with $\deg W_{\mathbf{m}%
} $ is at most $|\mathbf{m}|$. $\{W_{\mathbf{m}}\}$ is said to be a \textit{%
type~II} multiple orthogonal with respect to the set of linear functionals $%
\{\mathrm{M}^{1},\mathrm{M}^{2}\}$ and multi--index $\boldsymbol{m}$ if it
satisfy the following orthogonality conditions%
\begin{equation}
\mathrm{M}^{j}(x^{k}W_{\mathbf{n}})=0_{3\times 3}\,,\ \ k=0,1,\ldots
,n_{j}-1\,,\ \ j=1,2.  \label{[Sec5]-OrtConduj}
\end{equation}%
A multi--index $\boldsymbol{m}$ is said to be \textit{normal} for the set of
matrix moment functionals $\{\mathrm{M}^{1},\mathrm{M}^{2}\}$, if the degree
of $W_{\mathbf{m}}$ is exactly $|\mathbf{m}|=m$. Thus, in what follows we
will write $W_{\mathbf{m}}\equiv W_{|\mathbf{m}|}=W_{m}$.

In this framework, let consider the sequence of \textit{vector of matrix
polynomials} $\{\mathcal{B}_{n}\}$ where%
\begin{equation}
\mathcal{B}_{n}=%
\begin{bmatrix}
W_{2n} \\ 
W_{2n+1}%
\end{bmatrix}%
.  \label{[Sec5]-BnofWn}
\end{equation}%
We define the vector of matrix--functionals $\mathfrak{M}=%
\begin{bmatrix}
\mathrm{M}^{1} & \mathrm{M}^{2}%
\end{bmatrix}%
^{T}$, with $\mathfrak{M}:\mathbb{P}^{6\times 3}\rightarrow \mathcal{M}%
_{6\times 6}$, by means of the action $\mathfrak{M}$ on $\mathcal{B}_{n}$,
as follows%
\begin{equation}
\mathfrak{M}\left( \mathcal{B}_{n}\right) =%
\begin{bmatrix}
\mathrm{M}^{1}(W_{2n}) & \mathrm{M}^{2}(W_{2n}) \\ 
\mathrm{M}^{1}(W_{2n+1}) & \mathrm{M}^{2}(W_{2n+1})%
\end{bmatrix}%
\in \mathcal{M}_{6\times 6}\,.  \label{[Sec5]-UBn}
\end{equation}%
where%
\begin{equation}
\mathrm{M}^{i}(W_{j})=\int W_{j}(x)d\mathbf{M}^{i}=\Delta _{j}^{i},\quad
i=1,2,\text{ and }j=0,1,  \label{[Sec5]-tauiWj}
\end{equation}%
with $\{d\mathbf{M}^{1},d\mathbf{M}^{2}\}$ being a system of two matrix of
measures as described above. Thus, we say that a sequence of vectors of
matrix polynomials $\{\mathcal{B}_{n}\}$ is orthogonal with respect to a
vector of matrix functionals $\mathfrak{M}$ if%
\begin{equation}
\left. 
\begin{array}{rll}
i) & \mathfrak{M}(x^{k}\mathcal{B}_{n})=0_{6\times 6}\,, & k=0,1,\ldots
,n-1\,, \\ 
ii) & \mathfrak{M}(x^{n}\mathcal{B}_{n})=\Omega _{n}\,, & 
\end{array}%
\right\}  \label{[Sec5]-MatrixOrthoU}
\end{equation}%
where $\Omega _{n}$ is a regular block upper triangular $6\times 6$ matrix,
holds.

Now we are in a position to prove the following

\begin{teo}[Favard type]
\label{[SEC5]-TH2}Let $\{\mathcal{B}_{n}\}$ a sequence of vectors of matrix
polynomials of size $6\times 3$, defined in (\ref{[Sec5]-BnofWn}), with $%
W_{n}$ matrix polynomials satisfying the four term recurrence relation (\ref%
{[Sec5]-Matrix4TRR-2}). The following statements are equivalent:

\begin{itemize}
\item[$(a)$] The sequence $\{\mathcal{B}_{n}\}_{n\geq 0}$ is orthogonal with
respect to a certain vector of two matrix--functionals.

\item[$(b)$] There are sequences of scalar $6\times 6$ block matrices $%
\{A_{n}^{\langle 3\rangle }\}_{n\geq 0}$, $\{B_{n}^{\langle 3\rangle
}\}_{n\geq 0}$, and $\{C_{n}^{\langle 3\rangle }\}_{n\geq 0}$, with $%
C_{n}^{\langle 3\rangle }$\ block upper triangular non-singular matrix for $%
n\in \mathbb{N}$, such that the sequence $\{\mathcal{B}_{n}\}$ satisfy the
matrix three term recurrence relation%
\begin{equation}
x\mathcal{B}_{n}=A_{n}^{\langle 3\rangle }\mathcal{B}_{n+1}+B_{n}^{\langle
3\rangle }\mathcal{B}_{n}+C_{n}^{\langle 3\rangle }\mathcal{B}_{n-1},
\label{[Sec5]-MatrixFavard-1}
\end{equation}%
with $\mathcal{B}_{n-1}=%
\begin{bmatrix}
0_{3\times 3} & 0_{3\times 3}%
\end{bmatrix}%
^{T}$, $\mathcal{B}_{0}$ given, and $C_{n}^{\langle 3\rangle }$ non-singular.
\end{itemize}
\end{teo}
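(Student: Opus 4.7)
My plan is to prove the equivalence by a matrix-valued adaptation of the classical Favard argument, working directly with the $6\times 3$ matrix polynomials $\mathcal{B}_n$ and with the vector of matrix--functionals framework introduced in $(\ref{[Sec5]-BnofWn})$--$(\ref{[Sec5]-MatrixOrthoU})$.

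For the implication $(a)\Rightarrow (b)$, I would exploit that $\{\mathcal{B}_n\}_{n\geq 0}$ is a graded basis of $\mathbb{P}^{6\times 3}$, so the polynomial $x\mathcal{B}_n$, being of degree $n+1$, admits a unique finite expansion
\begin{equation*}
x\mathcal{B}_n = \sum_{k=0}^{n+1} M_{n,k}\,\mathcal{B}_k, \qquad M_{n,k}\in \mathcal{M}_{6\times 6}.
\end{equation*}
Applying $\mathfrak{M}(x^{j}\cdot)$ to both sides and exploiting the orthogonality $(\ref{[Sec5]-MatrixOrthoU})$ to annihilate every term $\mathfrak{M}(x^{j+1}\mathcal{B}_k)$ with $j+1<k$, an induction on $j=0,1,\ldots,n-2$ together with the regularity of the block upper triangular matrices $\Omega_k$ forces $M_{n,k}=0$ for $k\leq n-2$. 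One then sets $A_n^{\langle 3\rangle}:=M_{n,n+1}$, $B_n^{\langle 3\rangle}:=M_{n,n}$, $C_n^{\langle 3\rangle}:=M_{n,n-1}$, and matching the coefficients against $\mathcal{B}_{n-1}$ in $\mathfrak{M}(x^n\mathcal{B}_n)=\Omega_n$ yields the identity $\Omega_n = C_n^{\langle 3\rangle}\Omega_{n-1}$, which simultaneously gives the non-singularity and the block upper triangular structure of $C_n^{\langle 3\rangle}$.

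For the converse $(b)\Rightarrow (a)$, the vector of matrix--functionals must be constructed from scratch. I would \emph{define} $\mathfrak{M}$ on the basis by $\mathfrak{M}(\mathcal{B}_0):=\Omega_0$ for any prescribed regular block upper triangular $\Omega_0\in\mathcal{M}_{6\times 6}$ and $\mathfrak{M}(\mathcal{B}_n):=0_{6\times 6}$ for $n\geq 1$, extending by left--linearity to all of $\mathbb{P}^{6\times 3}$. The action on higher moments is then forced by the recurrence $(\ref{[Sec5]-MatrixFavard-1})$ through
\begin{equation*}
\mathfrak{M}(x^k\mathcal{B}_n) = A_n^{\langle 3\rangle}\mathfrak{M}(x^{k-1}\mathcal{B}_{n+1}) + B_n^{\langle 3\rangle}\mathfrak{M}(x^{k-1}\mathcal{B}_n) + C_n^{\langle 3\rangle}\mathfrak{M}(x^{k-1}\mathcal{B}_{n-1}).
\end{equation*}
An induction on $k$ then shows $\mathfrak{M}(x^k\mathcal{B}_n)=0$ whenever $n>k$, since all three indices on the right-hand side are $\geq k$ and the inductive hypothesis applies. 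On the diagonal $n=k$ only the last term survives and unfolds to
\begin{equation*}
\Omega_n := \mathfrak{M}(x^n\mathcal{B}_n) = C_n^{\langle 3\rangle}C_{n-1}^{\langle 3\rangle}\cdots C_1^{\langle 3\rangle}\Omega_0,
\end{equation*}
which is non-singular and block upper triangular as the product of such matrices. The splitting $\mathfrak{M}=[\mathrm{M}^1,\mathrm{M}^2]^T$ is then read off from the block structure in $(\ref{[Sec5]-UBn})$ by projecting onto the first and second $3\times 3$ row-blocks, recovering the two matrix--functional entries acting as in $(\ref{[Sec5]-tauiWj})$.

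I expect the main technical obstacle to lie in the converse direction: verifying, throughout the inductive construction, that $\mathfrak{M}$ is genuinely well defined as a left--linear map on all of $\mathbb{P}^{6\times 3}$ and that the blocks $\Omega_n$ retain their regular block upper triangular structure despite the non-commutativity of the matrix coefficients $A_n^{\langle 3\rangle},B_n^{\langle 3\rangle},C_n^{\langle 3\rangle}$ inherited from $(\ref{[Sec5]-Matrix4TRR-coef})$. A secondary point is the identification of $\mathfrak{M}$ with the pair $(\mathrm{M}^1,\mathrm{M}^2)$ of matrix moment functionals compatibly with the block decomposition of $(\ref{[Sec5]-UBn})$; this is purely a linear-algebra projection and does not address the separate (analytic) question of whether the $\mathrm{M}^i$ arise from genuine matrix measures $d\mathbf{M}^i$, which lies outside the algebraic Favard statement being proved here.
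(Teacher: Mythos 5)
Your proposal is correct and follows the same overall strategy as the paper: for $(a)\Rightarrow(b)$ you expand $x\mathcal{B}_n$ in the graded basis and use the orthogonality conditions (\ref{[Sec5]-MatrixOrthoU}) together with the regularity of the $\Omega_k$ to kill all coefficients below index $n-1$, exactly as the paper does; for $(b)\Rightarrow(a)$ you construct the functional and verify the orthogonality by induction through the recurrence, arriving at the same product formula $\mathfrak{M}(x^n\mathcal{B}_n)=C_n^{\langle 3\rangle}\cdots C_1^{\langle 3\rangle}\Omega_0$ as in (\ref{[Sec5]-Concl2}). The one point where you genuinely diverge is the construction step of the converse: the paper pins down $\mathfrak{M}$ by recursively solving for its moments $\mathfrak{M}_1,\mathfrak{M}_2,\ldots$ from the recurrence, which forces it to write expressions involving $(A_0^{\langle 3\rangle})^{-1}$ --- a delicate move, since the block matrix $A_n^{\langle 3\rangle}=\left[\begin{smallmatrix}0&0\\ \ast&0\end{smallmatrix}\right]$ produced in the proof of Theorem \ref{[SEC5]-TH1} is nilpotent, hence singular. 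Your alternative of defining $\mathfrak{M}$ directly on the dual basis ($\mathfrak{M}(\mathcal{B}_0)=\Omega_0$, $\mathfrak{M}(\mathcal{B}_n)=0$ for $n\geq 1$) and letting the recurrence propagate the values of $\mathfrak{M}(x^k\mathcal{B}_n)$ sidesteps that inversion entirely and is the cleaner route; the price, which you correctly flag, is that one must check compatibility of this definition with left multiplication by matrix coefficients in the sense of (\ref{[Sec2]-Def-LeftMultp}), a point the paper also leaves implicit. Your closing remark that the existence of genuine matrix measures $d\mathbf{M}^1,d\mathbf{M}^2$ behind the functionals $\mathrm{M}^1,\mathrm{M}^2$ is a separate analytic question is accurate; the paper asserts it without proof.
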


\begin{proof}
First we prove that $(a)$ implies $(b)$. Since the sequence of vector of
matrix polynomials $\{\mathcal{B}_{n}\}$ is a basis in the linear space $%
\mathbb{P}^{6\times 3}$, we can write%
\begin{equation*}
x\mathcal{B}_{n}=\sum_{k=0}^{n+1}\widetilde{A}_{k}^{n}\mathcal{B}%
_{k},\,\,\,\,\widetilde{A}_{k}^{n}\in \mathcal{M}_{6\times 6}\,.
\end{equation*}%
Then, from the orthogonality conditions (\ref{[Sec5]-MatrixOrthoU}), we get%
\begin{equation*}
\mathfrak{M}\left( x\mathcal{B}_{n}\right) =\mathfrak{M}(\widetilde{A}%
_{k}^{n}\mathcal{B}_{k})=0_{6\times 6},\quad k=0,1,\ldots ,\,n-2.
\end{equation*}%
Thus,%
\begin{equation*}
x\mathcal{B}_{n}=\widetilde{A}_{n+1}^{n}\mathcal{B}_{n+1}+\widetilde{A}%
_{n}^{n}\mathcal{B}_{n}+\widetilde{A}_{n-1}^{n}\mathcal{B}_{n-1}\,.
\end{equation*}%
Having $\widetilde{A}_{n+1}^{n}=A_{n}^{\langle 3\rangle }$, $\widetilde{A}%
_{n}^{n}=B_{n}^{\langle 3\rangle }$, and $\widetilde{A}_{n-1}^{n}=C_{n}^{%
\langle 3\rangle }$, the result follows.

To proof that $(b)$\ implies $(a)$, we know from Theorem \ref{[SEC5]-TH1}
that the sequence of vector polynomials $\{W_{n}\}$ satisfy a four term
recurrence relation with matrix coefficients. We can associate this matrix
four term recurrence relation (\ref{[Sec5]-Matrix4TRR-2}) with the block
matrix three term recurrence relation (\ref{[Sec5]-MatrixFavard-1}). Then,
it is sufficient to show that $\mathfrak{M}$ is uniquely determined by its
orthogonality conditions (\ref{[Sec5]-MatrixOrthoU}), in terms of the
sequence $\{C_{n}^{\langle 3\rangle }\}_{n\geq 0}$ in that (\ref%
{[Sec5]-MatrixFavard-1}).

Next, from (\ref{[Sec5]-BnofWn}) we can rewrite the matrix four term
recurrence relation (\ref{[Sec5]-Matrix4TRR-2}) into a matrix three term
recurrence relation%
\begin{equation*}
x\mathcal{B}_{n}=%
\begin{bmatrix}
0_{3\times 3} & 0_{3\times 3} \\ 
A_{2n+1}^{\langle 2\rangle } & 0_{3\times 3}%
\end{bmatrix}%
\mathcal{B}_{n+1}+%
\begin{bmatrix}
B_{2n}^{\langle 2\rangle } & A_{2n}^{\langle 2\rangle } \\ 
C_{2n+1}^{\langle 2\rangle } & B_{2n+1}^{\langle 2\rangle }%
\end{bmatrix}%
\mathcal{B}_{n}+%
\begin{bmatrix}
D_{2n}^{\langle 2\rangle } & C_{2n}^{\langle 2\rangle } \\ 
0_{3\times 3} & D_{2n+1}^{\langle 2\rangle }%
\end{bmatrix}%
\mathcal{B}_{n-1},
\end{equation*}%
where the size of $\mathcal{B}_{n}$\ is $6\times 3$. We give $\mathfrak{M}$
in terms of its block matrix moments, which in turn are given by the matrix
coefficients in (\ref{[Sec5]-MatrixFavard-1}). There is a unique vector
moment functional $\mathfrak{M}$ and hence two matrix measures $d\mathbf{M}%
^{1}$ and $d\mathbf{M}^{2}$, such that%
\begin{equation*}
\mathfrak{M}\left( \mathcal{B}_{0}\right) =%
\begin{bmatrix}
\mathrm{M}^{1}(W_{0}) & \mathrm{M}^{2}(W_{0}) \\ 
\mathrm{M}^{1}(W_{1}) & \mathrm{M}^{2}(W_{1})%
\end{bmatrix}%
=C_{0}^{\langle 3\rangle }\in \mathcal{M}_{6\times 6},
\end{equation*}%
where $\mathrm{M}^{i}(W_{j})$ was defined in (\ref{[Sec5]-tauiWj}). For the
first moment of $\mathfrak{M}_{0}$ we get%
\begin{equation*}
\mathfrak{M}_{0}=\mathfrak{M}\left( \mathcal{B}_{0}\right) =%
\begin{bmatrix}
\Delta _{0}^{1} & \Delta _{0}^{2} \\ 
\Delta _{1}^{1} & \Delta _{1}^{2}%
\end{bmatrix}%
=C_{0}^{\langle 3\rangle }.
\end{equation*}%
Hence, we have $\mathfrak{M}\left( \mathcal{B}_{1}\right) =0_{6\times 6}$,
which from (\ref{[Sec5]-MatrixFavard-1}) also implies $0_{6\times 6}=x%
\mathfrak{M}\left( \mathcal{B}_{0}\right) -B_{0}^{\langle 3\rangle }%
\mathfrak{M}\left( \mathcal{B}_{0}\right) =\mathfrak{M}_{1}-B_{0}^{\langle
3\rangle }\mathfrak{M}_{0}$. Therefore%
\begin{equation*}
\mathfrak{M}_{1}=B_{0}^{\langle 3\rangle }C_{0}^{\langle 3\rangle }.
\end{equation*}%
By a similar argument, we have%
\begin{eqnarray*}
0_{6\times 6} &=&\mathfrak{M}\left( (A_{0}^{\langle 3\rangle })^{-1}x^{2}%
\mathcal{B}_{0}-(A_{0}^{\langle 3\rangle })^{-1}B_{0}^{\langle 3\rangle }x%
\mathcal{B}_{0}-B_{1}^{\langle 3\rangle }(A_{0}^{\langle 3\rangle })^{-1}x%
\mathcal{B}_{0}+B_{1}(A_{0}^{\langle 3\rangle })^{-1}B_{0}^{\langle 3\rangle
}\mathcal{B}_{0}-C_{1}^{\langle 3\rangle }\mathcal{B}_{0}\right) \\
&=&(A_{0}^{\langle 3\rangle })^{-1}\mathfrak{M}_{2}-\left( (A_{0}^{\langle
3\rangle })^{-1}B_{0}^{\langle 3\rangle }+B_{1}^{\langle 3\rangle
}(A_{0}^{\langle 3\rangle })^{-1}\right) \mathfrak{M}_{1}+\left(
B_{1}^{\langle 3\rangle }(A_{0}^{\langle 3\rangle })^{-1}B_{0}^{\langle
3\rangle }-C_{1}^{\langle 3\rangle }\right) \mathfrak{M}_{0}\in \mathcal{M}%
_{6\times 6},
\end{eqnarray*}%
which in turn yields the second moment of $\mathfrak{M}$%
\begin{equation*}
\mathfrak{M}_{2}=\left( B_{0}^{\langle 3\rangle }+A_{0}^{\langle 3\rangle
}B_{1}^{\langle 3\rangle }(A_{0}^{\langle 3\rangle })^{-1}\right)
B_{0}^{\langle 3\rangle }C_{0}^{\langle 3\rangle }-A_{0}^{\langle 3\rangle
}\left( B_{1}^{\langle 3\rangle }(A_{0}^{\langle 3\rangle
})^{-1}B_{0}^{\langle 3\rangle }-C_{1}^{\langle 3\rangle }\right)
C_{0}^{\langle 3\rangle }.
\end{equation*}%
Repeated application of this inductive process, enables us to determine $%
\mathfrak{M}$ in a unique way through its moments, only in terms of the
sequences of matrix coefficients $\{A_{n}^{\langle 3\rangle }\}_{n\geq 0}$, $%
\{B_{n}^{\langle 3\rangle }\}_{n\geq 0}$ and $\{C_{n}^{\langle 3\rangle
}\}_{n\geq 0}$.

On the other hand, because of (\ref{[Sec5]-UBn}) and (\ref%
{[Sec5]-MatrixFavard-1}) we have%
\begin{equation*}
\mathfrak{M}\left( x\mathcal{B}_{n}\right) =0_{6\times 6},\quad n\geq 2.
\end{equation*}%
Multiplying by $x$ both sides of (\ref{[Sec5]-MatrixFavard-1}), from the
above result we get%
\begin{equation*}
\mathfrak{M}\left( x^{2}\mathcal{B}_{n}\right) =0_{6\times 6},\quad n\geq 3.
\end{equation*}%
The same conclusion can be drawn for $0<k<n$%
\begin{equation}
\mathfrak{M}\left( x^{k}\mathcal{B}_{n}\right) =0_{6\times 6},\quad 0<k<n,
\label{[Sec5]-Concl1}
\end{equation}%
and finally%
\begin{equation*}
\mathfrak{M}\left( x^{n}\mathcal{B}_{n}\right) =C_{n}^{\langle 3\rangle }%
\mathfrak{M}\left( x^{n-1}\mathcal{B}_{n-1}\right) .
\end{equation*}%
Notice that the repeated application of the above argument leads to%
\begin{equation}
\mathfrak{M}\left( x^{n}\mathcal{B}_{n}\right) =C_{n}^{\langle 3\rangle
}C_{n-1}^{\langle 3\rangle }C_{n-2}^{\langle 3\rangle }\cdots C_{1}^{\langle
3\rangle }C_{0}^{\langle 3\rangle }.  \label{[Sec5]-Concl2}
\end{equation}%
From (\ref{[Sec5]-Concl1}), (\ref{[Sec5]-Concl2}) we conclude%
\begin{eqnarray*}
\mathfrak{M}\left( x^{k}\mathcal{B}_{n}\right) &=&C_{n}^{\langle 3\rangle
}C_{n-1}^{\langle 3\rangle }C_{n-2}^{\langle 3\rangle }\cdots C_{1}^{\langle
3\rangle }C_{0}^{\langle 3\rangle }\delta _{n,k} \\
&=&\Omega _{n}\delta _{n,k},\quad n,k=0,1,\ldots ,\,k\leq n,
\end{eqnarray*}%
which are exactly the desired orthogonality conditions (\ref%
{[Sec5]-MatrixOrthoU}) for $\mathfrak{M}$ stated in the theorem.
\end{proof}


\end{document}